\documentclass[a4paper,11pt]{svjour3}

\usepackage{ifthen}

\newboolean{AppendixOutsourcing}
\setboolean{AppendixOutsourcing}{false}

\usepackage[ngerman,english]{babel}
\usepackage[T1]{fontenc}
\usepackage[latin1]{inputenc}
\usepackage[numbers,comma,sort&compress]{natbib}
\usepackage{verbatim} 
\usepackage{multirow}
\usepackage{hyperref}

\usepackage{enumitem}

\newif\ifpdf
\ifx\pdfoutput\undefined
\usepackage{graphicx}
\pdffalse
\else
\pdftrue
\usepackage{graphicx}
\usepackage{epstopdf}
\fi

\usepackage{fancyhdr}

\usepackage{amsmath,amsfonts,amssymb,amscd,amsthm,xspace}
\theoremstyle{plain}
\newtheorem{algorithm}[theorem]{Algorithm}     
\newtheorem{notation}[theorem]{Notation}     
\usepackage[centerlast,small,sc]{caption}

\usepackage[numbers,comma,sort&compress]{natbib}
\usepackage{url}
\setcounter{tocdepth}{3} 


\newcommand{\komma}{\textnormal{~,}}
\newcommand{\punkt}{\textnormal{~.}}

\newcommand{\sgn}{\mathrm{sgn}}


\newcommand{\var}{\mathrm{var}}

\newcommand{\opt}{\mathrm{opt}}

\newcommand{\rp}{\textnormal{rp}}
\newcommand{\NforMPBNGC}{\textnormal{Nb}}
\newcommand{\NforSolvOpt}{\textnormal{Nc}}
\newcommand{\TotalTime}{\textnormal{$\textnormal{t}_{1}$}}
\newcommand{\PartTime}{\textnormal{$\textnormal{t}_{2}$}}


\renewcommand{\forall}{\textnormal{for all }}

\newcommand{\Nfunc}{Y}
\newcommand{\NdimCer}{N}

\newcommand{\Ffrak}{\mathfrak F}  
\newcommand{\wVariable}{w}        
\newcommand{\wDimension}{q}       


\newcommand{\zeroVector}[1]{0}

\newcommand{\zeroMatrix}[1]{0}

\newcommand{\idMatrix}[1]{I}


\newcommand{\zeroVectorI}[1]{0}

\newcommand{\zeroMatrixI}[1]{0}


\newcommand{\Sym}[1]{\mathbb{R}_{\mathrm{sym}}^{#1\times#1}}

\newcommand{\Tril}[1]{\mathbb{R}_{\mathrm{tril}}^{#1\times#1}}

\newcommand{\Triu}[1]{\mathbb{R}_{\mathrm{triu}}^{#1\times#1}}

\newcommand{\sTriu}[1]{\mathbb{R}_{\mathrm{striu}}^{#1\times#1}}


\newcommand{\Rneg}{\mathbb{R}_{\leq0}}

\newcommand{\Rpos}{\mathbb{R}_{\geq0}}


\usepackage{color} 
\definecolor{gold}{rgb}{0.85,.66,0}

\definecolor{orange}{rgb}{1,0.5,0}   
\definecolor{violet}{rgb}{0.5,0,0.5} 


\newcommand{\GenaueAngabeANone}{}
\newcommand{\GenaueAngabeANtwo}{}
\newcommand{\GenaueAngabeANthree}{}
\newcommand{\GenaueAngabeHD}{}

\newcommand{\GenaueAngabeConvergenceTheory}{p.~7,~3.1~Theoretical basics}  

\newcommand{\GenaueAngabeOne}{p.~147~ff,~Chapter~5}                        

\newcommand{\GenaueAngabeThree}{}
\newcommand{\GenaueAngabeFour}{}
\newcommand{\GenaueAngabeFive}{}
\newcommand{\GenaueAngabeSix}{}

\newcommand{\refh}[1]{\textnormal{(\ref{#1})}}   


\def\fullTitle{{Certificates of infeasibility via nonsmooth optimization}}

\def\AuthorOne{{Hannes Fendl}}
\def\AuthorTwo{{Arnold Neumaier}}
\def\AuthorThree{{Hermann Schichl}}

\ifpdf
\hypersetup{
    pdftitle  = {\fullTitle},
    pdfauthor = {\AuthorOne, \AuthorTwo, and \AuthorThree},
}
\fi

\usepackage[color]{changebar}
\setlength{\changebarwidth}{1cm}
\cbcolor{red}
\nochangebars

\newcommand{\cbstartRED}{}
\newcommand{\cbendRED}{}
\newcommand{\cbmathRED}{}

\newlength{\widebarargwidth}
\newlength{\widebarwidth}
\newlength{\widebarargheight}
\newlength{\widebarargdepth}


\def\HauptAlgorithmus{{the second order bundle algorithm }}
\def\HauptAlgorithmusKomma{{the second order bundle algorithm, }}


\newcommand{\notocsection}[1]{\refstepcounter{section}\section*{\thesection \quad #1}}


\newcommand{\emptyh}[1]{}


\begin{document}

\title{\fullTitle}
\author{\AuthorOne\thanks{This research was supported by the Austrian Science Found (FWF) Grant Nr.~P22239-N13.} \and \AuthorTwo \and \AuthorThree}
\institute{Faculty of Mathematics, University of Vienna, Austria\\
  Oskar-Morgenstern-Pl.~1, A-1090 Wien, Austria\\
\email{hermann.schichl@univie.ac.at,arnold.neumaier@univie.ac.at}}


\maketitle

\begin{abstract}
An important aspect in the solution process of constraint satisfaction problems is to identify exclusion boxes which are boxes that do not contain feasible points. This paper presents a certificate of infeasibility for finding such boxes by solving a linearly constrained nonsmooth optimization problem. Furthermore, the constructed certificate can be used to enlarge an exclusion box by solving a nonlinearly constrained nonsmooth optimization problem. 
\end{abstract}

\keywords{Global optimization, nonsmooth optimization, certificate of infeasibility}
\subclass{90C26, 90C56, 90C57}


\section{Introduction}
An important area of modern research is global optimization as it occurs very frequently in applications (extensive surveys on global optimization can be found in \citet{neumaier2004complete}, \citet{Flo,Flo.MI}, \citet{Han}, and \citet{Kea}). A method for solving global optimization problems efficiently is by using a branch and bound algorithm (as, e.g., BARON by \citet{Sah,Sah.baron}, the COCONUT environment by \citet{schichl2004global,schichlcoconut,SchichlHabilitation}, or LINGO by \citet{schichl:LINGO}), which divides the feasible set into smaller regions and then tries to exclude regions that cannot contain a global optimizer. Therefore, it is important to have tools which allow to identify such regions. In this paper we will present a method which is able to find such regions for a CSP (constraint satisfaction problem), i.e.~for a global optimization problem with a constant objective function,
\cbstartRED by generalizing the approach from \citet{HannesDissertation}.\cbendRED
~\\
\cbstartRED\textbf{Certificate of infeasibility}. For this purpose we consider the CSP
\begin{equation}
\begin{split}
F(x)&\in\boldsymbol{F}\\
  x &\in\boldsymbol{x}
\label{coi:CSP}
\end{split}
\end{equation}
with $F:\mathbb{R}^n\longrightarrow\mathbb{R}^m$, $\boldsymbol{x}\in\mathbb{IR}^n$, $\boldsymbol{F}\in\mathbb{IR}^m$,
\cbendRED
and we assume that a solver, which is able to solve a CSP, takes the box $\boldsymbol{u}:=[\underline{u},\overline{u}]\subseteq\boldsymbol{x}$ into consideration during the solution process. We constructed a certificate of infeasibility $f$, which is a nondifferentiable and nonconvex function in general, with the following property: If there exists a vector $y$ with
\begin{equation}
f(y,\underline{u},\overline{u})<0\komma
\label{Abstract:CertificateNegative}
\end{equation}
then the CSP (\ref{coi:CSP}) has no feasible point in $\boldsymbol{u}$ and consequently this box can be excluded for the rest of the solution process. Therefore,
a box $\boldsymbol{u}$ for which (\ref{Abstract:CertificateNegative}) holds is called an \textbf{exclusion box}.

Easy examples immediately show that there exist CSPs which have boxes that satisfy $(\ref{Abstract:CertificateNegative})$, so it is worth to pursue this approach further.
~\\
\textbf{Exclusion boxes}. The obvious way for finding an exclusion box for the CSP (\ref{coi:CSP}) is to minimize $f$
\begin{equation}
\begin{split}
\min_{y}{f(y,\underline{u},\overline{u})}
\label{Abstract:coi3:Optimierungsproblem}
\end{split}
\end{equation}
and stop the minimization if a negative function value occurs. Since
modern solvers offer
many other possibilities for treating a box, we do not want to spend too much time for this minimization problem. Therefore, the idea is to let a nonsmooth solver only perform a few steps for solving (\ref{Abstract:coi3:Optimierungsproblem}).

To find at least an exclusion box $\boldsymbol{v}:=[\underline{v},\overline{v}]\subseteq\boldsymbol{u}$ with $\underline{v}+r\leq\overline{v}$, where $r\in(\zeroVector{n},\overline{u}-\underline{u})$ is fixed, we can try to solve the linearly constrained problem
\begin{equation*}
\begin{split}
&\min_{y,\underline{v},\overline{v}}{f(y,\underline{v},\overline{v})}\\
&\textnormal{ s.t. }[\underline{v}+r,\overline{v}]\subseteq\boldsymbol{u}\punkt
\end{split}
\end{equation*}
Another important aspect in this context is to enlarge an exclusion box $\boldsymbol{v}$ by solving
\begin{equation}
\begin{split}
&\max_{y,\underline{v},\overline{v}}{\mu(\underline{v},\overline{v})}\\
&\textnormal{ s.t. }f(y,\underline{v},\overline{v})\leq\delta\komma~[\underline{v},\overline{v}]\subseteq\boldsymbol{u}\komma
\label{Abstract:coi3:OptimierungsproblemNB}
\end{split}
\end{equation}
where $\delta<0$ is given and $\mu$ measures the magnitude of the box $\boldsymbol{v}$ (e.g., $\mu(\underline{v},\overline{v}):=\lvert\overline{v}-\underline{v}\rvert_{_1}$). Since only feasible points of (\ref{Abstract:coi3:OptimierungsproblemNB}) are useful for enlarging an exclusion box and we only want to perform a few steps of a nonsmooth solver as before, we expect benefits from a nonsmooth solver that only creates feasible iterates because then the current best point can always be used for our purpose. For proofs in explicit detail we refer the reader to \citet[\GenaueAngabeOne]{HannesDissertation}.

The paper is organized as follows: In Section \ref{Paper:PresentationOfTheApplication} we first recall the basic facts of interval analysis which are necessary for introducing the certificate of infeasibility which is done afterwards. Then we discuss some important properties of the certificate and we explain in detail how the certificate is used for obtaining exclusion boxes in a CSP by applying a nonsmooth solver.
In Section \ref{Paper:ImplementationIssuesForFindingExclusionBoxes} we explain how we obtain a starting point for optimization problems to which we apply the nonsmooth solver.

Throughout the paper we use the following notation: We denote the non-negative real numbers by $\Rpos:=\lbrace x\in\mathbb{R}:~x\geq0\rbrace$ (and analogously for $\leq$ as well as $>$). Furthermore, we denote the $p$-norm of $x\in\mathbb{R}^n$ by $\lvert x\rvert_{_p}$ for $p\in\lbrace1,2,\infty\rbrace$.

\section{Presentation of the application}
\label{Paper:PresentationOfTheApplication}
After summarizing the most basic facts of interval analysis, we construct the certificate of infeasibility in this section. Furthermore, we discuss how a nonsmooth solver can use this certificate to obtain an exclusion box in a CSP.
\subsection{Interval arithmetic}
We recall some basic facts on interval arithmetic from, e.g., \citet{NeumaierIV}.
We denote a box (also called interval vector) by $\boldsymbol{x}=[\underline{x},\overline{x}]$ and the set of all boxes by $\mathbb{IR}^n:=\lbrace\boldsymbol{x}:\boldsymbol{x}=[\underline{x},\overline{x}],\underline{x}\leq\overline{x}\rbrace$. For $S\subseteq\mathbb{R}$ bounded, we denote the hull of $S$ by $\square{S}:=[\inf{S},\sup{S}]$. We extend the arithmetic operations and functions $\varphi:\mathbb{R}\longrightarrow\mathbb{R}$ to boxes by defining
\begin{equation}
\varphi(\boldsymbol{x})
:=
\square\lbrace\varphi(x):x\in\boldsymbol{x}\rbrace
\label{coi:DefIntervallFkt}
\punkt
\end{equation}
For every expression $\Phi$ of $\varphi:\mathbb{R}^n\longrightarrow\mathbb{R}$ which is a composition of arithmetic operations and elementary functions the fundamental theorem of interval arithmetic holds
\begin{equation}
[\inf_{x\in\boldsymbol{x}}\varphi(x),\sup_{x\in\boldsymbol{x}}\varphi(x)]\subseteq\Phi(\boldsymbol{x})
\punkt
\label{coi:Satz:Hauptsatz}
\end{equation}

\subsection{Certificate of infeasibility}
\cbstartRED
Let $s:\mathbb{R}^m\times\mathbb{R}^n\times\mathbb{R}^{\wDimension}\times\mathbb{IR}^n\longrightarrow\mathbb{IR}$ be a function. We assume that $Z:\mathbb{R}^m\times\mathbb{R}^n\times\mathbb{R}^{\wDimension}\times\mathbb{R}^n\times\mathbb{R}^n\longrightarrow\mathbb{R}$
\begin{equation}
Z(y,z,\wVariable,\underline{x},\overline{x})
:=
\sup{s(y,z,\wVariable,\boldsymbol{x})}
\label{coi3:Def:Z}
\komma
\end{equation}
where $\boldsymbol{x}=[\underline{x},\overline{x}]\in\mathbb{IR}^n$, satisfies
\begin{equation}
Z(y,z,\wVariable,\underline{x},\overline{x})
\geq
\sup_{x\in\boldsymbol{x}}{y^T\big(F(x)-F(z)\big)}
\punkt
\label{Def:phi:InclusionProperty}
\end{equation}
\begin{example}
If we set $\wVariable=(R,S)\in\Triu{n}\times\sTriu{n}$, where we denote the linear space of the upper resp.~strictly upper triangular $n\times n$-matrices by $\Triu{n}\cong\mathbb{R}^{n_1}$ with $n_1:=\tfrac{1}{2}n(n+1)$ resp.~$\sTriu{n}\cong\mathbb{R}^{n_0}$ with $n_0:=\tfrac{1}{2}(n-1)n$, which implies $\wDimension=n_0+n_1=n^2$, and if we define
\begin{equation}
s_1(y,z,R,S,\boldsymbol{x})
:=
\Big(\sum_{k=1}^m{y_k\Ffrak_k[z,\boldsymbol{x}]+(\boldsymbol{x}-z)^T(R^TR+S^T-S)}\Big)(\boldsymbol{x}-z)
\label{example:Def:sk2}
\end{equation}
then the corresponding $Z$ satisfies (\ref{Def:phi:InclusionProperty}) because:
Due to the skew-symmetry of $S^T-S$, we have
\begin{equation}
y^T\big(F(x)-F(z)\big)
+(x-z)^T(R^TR+S^T-S)(x-z)
\geq
y^T\big(F(x)-F(z)\big)
\punkt
\label{example:Estimation:sk2:Proof:Additional1}
\end{equation}
for all $x\in\boldsymbol{x}$.
Since the slope expansion
\begin{equation}
F_k(x)
=
F_k(z)+F_k[z,x](x-z)
\label{example:Estimation:sk2}
\komma
\end{equation}
where the slope $F_k[z,x]\in\mathbb{R}^{1\times n}$, holds for all $x,z\in\mathbb{R}^n$
(cf., e.g., \citet{NeumaierIV}),
we obtain for all $x\in\boldsymbol{x}$
\begin{equation}
y^T\big(F(x)-F(z)\big)
+(x-z)^T(R^TR+S^T-S)(x-z)
\subseteq
s_1(y,z,R,S,\boldsymbol{x})
\label{example:Estimation:sk2:Proof:Final}
\end{equation}
due to (\ref{example:Estimation:sk2}), (\ref{coi:DefIntervallFkt}), (\ref{coi:Satz:Hauptsatz}), and (\ref{example:Def:sk2}).
Now we obtain (\ref{Def:phi:InclusionProperty}) due to (\ref{example:Estimation:sk2:Proof:Additional1}), (\ref{example:Estimation:sk2:Proof:Final}), and (\ref{coi3:Def:Z}).
\end{example}
\begin{proposition}
It holds for all $z\in\boldsymbol{x}$
\begin{equation}
Z(y,z,w,\underline{x},\overline{x})\geq0
\punkt
\label{coi3:SatzZnichtnegativ}
\end{equation}
\end{proposition}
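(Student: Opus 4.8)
The plan is to reduce the inequality \refh{coi3:SatzZnichtnegativ} directly to the inclusion property \refh{Def:phi:InclusionProperty}, which is the only nontrivial fact available and which was precisely the defining requirement on $Z$. The key observation is that the hypothesis $z\in\boldsymbol{x}$ is exactly what is needed to make the right-hand side of \refh{Def:phi:InclusionProperty} nonnegative: since the supremum there is taken over all $x\in\boldsymbol{x}$, and $z$ is one such point, I can bound the supremum below by its value at the single point $x=z$.

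Concretely, first I would invoke \refh{Def:phi:InclusionProperty} to write
\begin{equation*}
Z(y,z,w,\underline{x},\overline{x})
\geq
\sup_{x\in\boldsymbol{x}}{y^T\big(F(x)-F(z)\big)}\punkt
\end{equation*}
Next, because $z\in\boldsymbol{x}$, the choice $x=z$ is admissible in the supremum, so the supremum is at least the corresponding summand, which gives
\begin{equation*}
\sup_{x\in\boldsymbol{x}}{y^T\big(F(x)-F(z)\big)}
\geq
y^T\big(F(z)-F(z)\big)=0\punkt
\end{equation*}
Chaining the two displays yields $Z(y,z,w,\underline{x},\overline{x})\geq0$, as claimed.

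There is essentially no computational obstacle here; the entire content of the statement is the interplay between two hypotheses, namely the inclusion property and the membership $z\in\boldsymbol{x}$. The only point one must be careful about is that \refh{Def:phi:InclusionProperty} is stated as a genuine inequality (the real number $Z$ dominates the supremum), so that once the supremum is shown to be nonnegative the conclusion transfers to $Z$ without any further argument about whether the supremum is attained. I therefore expect the proof to be a two-line deduction, and I would present it exactly in that compressed form.
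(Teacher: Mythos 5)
Your proof is correct and is exactly the paper's argument: the paper's one-line proof ("follows from \refh{Def:phi:InclusionProperty} and the assumption that $z\in\boldsymbol{x}$") is precisely your two-step deduction, which you have merely written out in full. Nothing further is needed.
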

\begin{proof}
(\ref{coi3:SatzZnichtnegativ}) follows from (\ref{Def:phi:InclusionProperty}) and the assumption that $z\in\boldsymbol{x}$.
\end{proof}
\begin{definition}
\label{AN:Definition:NfuncZf}
We define $\Nfunc:\mathbb{R}^m\times\mathbb{R}^n\longrightarrow\mathbb{R}$ and $f:\mathbb{R}^m\times\mathbb{R}^n\times\mathbb{R}^{\wDimension}\times\mathbb{R}^n\times\mathbb{R}^n\longrightarrow\mathbb{R}$ by
\begin{align}
\Nfunc(y,z)
&:=
\inf{y^T\big(\boldsymbol{F}-F(z)\big)}
\label{coi3:Def:N}
\\
f(y,z,w,\underline{x},\overline{x})
&:=
\frac{Z(y,z,w,\underline{x},\overline{x})-\max{\big(0,\Nfunc(y,z)\big)}}{T(y,z,w,\underline{x},\overline{x})}
\label{coi3:Def:f}
\komma
\end{align}
where $T:\mathbb{R}^m\times\mathbb{R}^n\times\mathbb{R}^{\wDimension}\times\mathbb{R}^n\times\mathbb{R}^n\longrightarrow\mathbb{R}_{>0}$ is positive, continuous and differentiable almost everywhere.
\end{definition}
\begin{remark}
\label{COI:Remark:CertificateNumberOfVariablesAndNonsmooth}
$f$ from (\ref{coi3:Def:f}) depends on $\NdimCer=m+3n+\wDimension$ variables and is not differentiable everywhere and not convex (in general).
\end{remark}
Now we state the main theorem for our application.
\begin{theorem}
\label{coi3:Satz:UnzulässigkeitszertifikatVariableIntervallgrenze}
If there exist $y\in\mathbb{R}^m$, $\underline{x}\leq z\leq\overline{x}\in\mathbb{R}^n$, and $\wVariable\in\mathbb{R}^{\wDimension}$ with $f(y,z,w,\underline{x},\overline{x})<0$, then for all $x\in\boldsymbol{x}$ there exists $k\in\lbrace1,\dots,m\rbrace$ with $F_k(x)\not\in\boldsymbol{F}_k$, i.e.~there is no $x\in\boldsymbol{x}$ with $F(x)\in\boldsymbol{F}$, i.e.~there is no feasible point.
\end{theorem}
\begin{proof}
(by contradiction) Suppose that there exists $\hat{x}\in\boldsymbol{x}:=[\underline{x},\overline{x}]$ with $F(\hat{x})\in\boldsymbol{F}$. By assumption there exist $y\in\mathbb{R}^m$ and $z\in\boldsymbol{x}\subseteq\mathbb{IR}^n$ with $f(y,z,\wVariable,\underline{x},\overline{x})<0$, which is equivalent to 
$
Z(y,z,\wVariable,\underline{x},\overline{x})<\Nfunc(y,z)
$
due to (\ref{coi3:Def:f}) and (\ref{coi3:SatzZnichtnegativ}). Since
\begin{equation*}
Z(y,z,\wVariable,\underline{x},\overline{x})
\geq
\sup_{x\in\boldsymbol{x}}{y^T\big(F(x)-F(z)\big)}
\geq
y^T\big(F(x)-F(z)\big)
\end{equation*}
for all $x\in\boldsymbol{x}$ due to (\ref{Def:phi:InclusionProperty})
and
\begin{equation*}
\Nfunc(y,z)
\leq
\inf_{\tilde{F}\in\boldsymbol{F}}{y^T\big(\tilde{F}-F(z)\big)}
\leq
y^T\big(\tilde{F}-F(z)\big)
\end{equation*}
for all $\tilde{F}\in\boldsymbol{F}$ due to (\ref{coi3:Def:N}) and (\ref{coi:Satz:Hauptsatz}),
we obtain for all $x\in\boldsymbol{x}$ and for all $\tilde{F}\in\boldsymbol{F}$
\begin{equation*}
y^T\big(F(x)-F(z)\big)
\leq
Z(y,z,\wVariable,\underline{x},\overline{x})
<
\Nfunc(y,z)
\leq
y^T\big(\tilde{F}-F(z)\big)
\komma
\end{equation*}
which implies that
we have
$
y^TF(x)
<
y^T\tilde{F}
$
for all $x\in\boldsymbol{x}$ and for all $\tilde{F}\in\boldsymbol{F}$.
Now, choosing $x=\hat{x}\in\boldsymbol{x}$ and $\tilde{F}=F(\hat{x})\in\boldsymbol{F}$ in
the last inequality
yields a contradiction.
\qedhere
\end{proof}
\cbendRED
The following proposition gives in particular a hint how the $y$-component of a starting point should be chosen (cf.~(\ref{coi:yStartpunktFLunboundedCOMPACT})).
\begin{proposition}
We have for all $y\in\mathbb{R}^m$ and $z\in\mathbb{R}^n$
\begin{equation}
\Nfunc(y,z)
=
\sum_{k=1}^m
\left\lbrace
\begin{array}{ll}
y_k\big(\underline{F}_k-F_k(z)\big) & \textnormal{for }y_k\geq0\\
y_k\big(\overline{F}_k-F_k(z)\big)  & \textnormal{for }y_k<0   \punkt
\end{array}
\right.
\label{coi:SatzNexplizit}
\end{equation}
Furthermore,
let
$I,J\subseteq\lbrace1,\dots,m\rbrace$ satisfy
$I\not=\emptyset~\vee~J\not=\emptyset$,
$\underline{F}_i=-\infty~\wedge~y_i>0$ for all $i\in I$ and
$\overline{F}_j=\infty~\wedge~y_j<0$ for all $j\in J$,
then we have for all $z\in\mathbb{R}^n$
\begin{equation}
\Nfunc(y,z)=-\infty\punkt
\label{coi:Proposition:NyzEqualsMinusInfty}
\end{equation}
\end{proposition}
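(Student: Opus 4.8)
The plan is to unfold the definition (\ref{coi3:Def:N}) of $\Nfunc$ by evaluating the interval expression $y^T\big(\boldsymbol{F}-F(z)\big)$ componentwise. First I would write it as $\sum_{k=1}^m y_k\big(\boldsymbol{F}_k-F_k(z)\big)$, where subtracting the real number $F_k(z)$ from the interval $\boldsymbol{F}_k=[\underline{F}_k,\overline{F}_k]$ yields the interval $[\underline{F}_k-F_k(z),\overline{F}_k-F_k(z)]$. By definition $\Nfunc(y,z)$ is the infimum, i.e.~the lower endpoint, of this interval sum.

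The core step is the rule for scalar multiplication of an interval: for $y_k\geq0$ the product $y_k[\underline a,\overline a]$ equals $[y_k\underline a,y_k\overline a]$, while for $y_k<0$ it equals $[y_k\overline a,y_k\underline a]$. Hence the lower endpoint of $y_k\big(\boldsymbol{F}_k-F_k(z)\big)$ is $y_k(\underline{F}_k-F_k(z))$ when $y_k\geq0$ and $y_k(\overline{F}_k-F_k(z))$ when $y_k<0$. Since the lower endpoint of a sum of intervals is the sum of the individual lower endpoints, taking the infimum distributes across the sum and produces (\ref{coi:SatzNexplizit}) directly. Equivalently, one may read $\Nfunc(y,z)$ as $\min_{\tilde{F}\in\boldsymbol{F}}\sum_k y_k(\tilde{F}_k-F_k(z))$, which separates into $m$ independent one-dimensional minimizations of the affine map $\tilde{F}_k\mapsto y_k(\tilde{F}_k-F_k(z))$ over $[\underline{F}_k,\overline{F}_k]$, each attained at $\underline{F}_k$ if $y_k\geq0$ and at $\overline{F}_k$ if $y_k<0$.

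For the second assertion I would argue from the explicit formula just obtained. If $I\not=\emptyset$, fix $i\in I$; because $y_i>0$ and $\underline{F}_i=-\infty$, the summand $y_i(\underline{F}_i-F_i(z))$ equals $-\infty$. Symmetrically, if $J\not=\emptyset$, fix $j\in J$; because $y_j<0$ and $\overline{F}_j=\infty$, the summand $y_j(\overline{F}_j-F_j(z))$ equals $-\infty$. Since $I\not=\emptyset\vee J\not=\emptyset$, at least one summand in (\ref{coi:SatzNexplizit}) is $-\infty$.

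The one point that needs care, and which I expect to be the main obstacle, is ruling out an $\infty-\infty$ indeterminacy, i.e.~verifying that no summand can be $+\infty$. For $y_k\geq0$ the relevant value involves $\underline{F}_k$, a lower bound, hence $\underline{F}_k<+\infty$, so the summand lies in $[-\infty,+\infty)$; for $y_k<0$ the relevant value involves $\overline{F}_k$, an upper bound, hence $\overline{F}_k>-\infty$, and multiplying by the negative scalar $y_k$ again gives a value in $[-\infty,+\infty)$. Thus every summand lies in $[-\infty,+\infty)$, so the presence of a single $-\infty$ term forces the total to be $-\infty$, which establishes (\ref{coi:Proposition:NyzEqualsMinusInfty}).
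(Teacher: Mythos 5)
Your proof is correct and follows essentially the same route as the paper, which simply reads off \refh{coi:SatzNexplizit} from the definition \refh{coi3:Def:N} and then observes that an index in $I$ or $J$ forces a $-\infty$ summand. Your additional check that no summand can equal $+\infty$ (so no $\infty-\infty$ indeterminacy arises) is a detail the paper leaves implicit, and it is handled correctly.
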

\begin{proof}
(\ref{coi:SatzNexplizit}) holds because of (\ref{coi3:Def:N}). For obtaining (\ref{coi:Proposition:NyzEqualsMinusInfty}), consider without loss of generality $\underline{F}_1=-\infty~\wedge~y_1>0$ and $\overline{F}_2=\infty~\wedge~y_2<0$, then the desired result follows from (\ref{coi:SatzNexplizit}).
\qedhere
\end{proof}

\subsection{Properties of the certificate for quadratic F}
\cbstartRED
In this subsection we consider the special case of $f$ with quadratic $F$, i.e.
\begin{equation}
F_k(x)
=
c_k^Tx+x^TC_kx
\label{coi:DefFk}
\end{equation}
with $c_k\in\mathbb{R}^n$ and $C_k\in\mathbb{R}^{n\times n}$ for $k=1,\dots,m$. Since
\begin{equation*}
F_k(x)-F_k(z)
=
\big(c_k^T+x^TC_k+z^TC_k^T\big)(x-z)
\end{equation*}
for all $x,z\in\mathbb{R}^n$, the slope expansion from (\ref{example:Estimation:sk2})
holds with
\begin{equation}
F_k[z,x]
=
c_k^T+x^TC_k+z^TC_k^T
\punkt
\label{coi:Def:QuadraticFk:Slope}
\end{equation}
\begin{proposition}
\label{coi:corollary:OriginalCertificate}
Let $F_k$ be quadratic and set
\begin{equation}
  \begin{aligned}
C(y)
:=
\sum_{k=1}^mC_ky_k
,~
c(y,z)
&:=
\sum_{k=1}^mc_ky_k+\big(C(y)+C(y)^T\big)z
,~
A(y,R,S)\\
&:=
C(y)+R^TR+S^T-S
\punkt
\label{coi:DefCCOMPACT}
  \end{aligned}
\end{equation}
Then \refh{Def:phi:InclusionProperty} is satisfied for
\begin{equation}
s_2(y,z,R,S,\boldsymbol{x})
:=
\big(c(y,z)^T+(\boldsymbol{x}-z)^TA(y,R,S)\big)(\boldsymbol{x}-z)
\punkt
\label{coi:DefZ}
\end{equation}
\end{proposition}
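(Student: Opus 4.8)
The plan is to reduce the claim to the Example: for quadratic $F$ the interval expression $s_2$ from \refh{coi:DefZ} is exactly the expression $s_1$ from \refh{example:Def:sk2} with the quadratic slope \refh{coi:Def:QuadraticFk:Slope} substituted, so that the inclusion property \refh{Def:phi:InclusionProperty} follows by the same argument as in the Example. First I would record the pointwise (real, not interval) identity that drives everything. Using the abbreviations $C(y)$, $c(y,z)$, $A(y,R,S)$ from \refh{coi:DefCCOMPACT} and the slope \refh{coi:Def:QuadraticFk:Slope}, summing over $k$ gives
\begin{equation*}
y^T\big(F(x)-F(z)\big)
=
\Big(\sum_{k=1}^m y_k c_k^T+x^TC(y)+z^TC(y)^T\Big)(x-z)
\end{equation*}
for all $x,z\in\mathbb{R}^n$.

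The core step is then the algebraic verification that, for every $x\in\boldsymbol{x}$,
\begin{equation*}
\big(c(y,z)^T+(x-z)^TA(y,R,S)\big)(x-z)
=
y^T\big(F(x)-F(z)\big)+(x-z)^T(R^TR+S^T-S)(x-z)\punkt
\end{equation*}
The only thing to watch is the bookkeeping of the $C(y)$-terms: expanding $c(y,z)^T=\sum_{k}y_k c_k^T+z^T\big(C(y)+C(y)^T\big)$ and $(x-z)^TA(y,R,S)=(x-z)^TC(y)+(x-z)^T(R^TR+S^T-S)$, the contribution $z^TC(y)$ cancels against the $-z^TC(y)$ hidden inside $(x-z)^TC(y)$, leaving precisely $x^TC(y)+z^TC(y)^T$ together with the correction term. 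This is exactly where the symmetrized term $\big(C(y)+C(y)^T\big)z$ in the definition of $c(y,z)$ does its work, and it is the one place where a stray sign or transpose could go wrong; everything else is routine.

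Finally I would transport this pointwise identity to intervals as in the Example. Reading the right-hand side above as the value at $x$ of the real expression underlying $s_2$, the fundamental theorem of interval arithmetic \refh{coi:Satz:Hauptsatz} yields
\begin{equation*}
y^T\big(F(x)-F(z)\big)+(x-z)^T(R^TR+S^T-S)(x-z)\in s_2(y,z,R,S,\boldsymbol{x})
\end{equation*}
for all $x\in\boldsymbol{x}$, the analogue of \refh{example:Estimation:sk2:Proof:Final}. Since $R^TR$ is positive semidefinite and $S^T-S$ is skew-symmetric, the correction term is nonnegative (the analogue of \refh{example:Estimation:sk2:Proof:Additional1}), so $y^T\big(F(x)-F(z)\big)\leq\sup s_2(y,z,R,S,\boldsymbol{x})$ for all $x\in\boldsymbol{x}$. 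Taking the supremum over $x\in\boldsymbol{x}$ and invoking \refh{coi3:Def:Z} then establishes \refh{Def:phi:InclusionProperty}. The main obstacle is purely the transpose/symmetrization bookkeeping in the middle identity; the rest is a direct transcription of the Example.
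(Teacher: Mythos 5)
Your proposal is correct and follows essentially the same route as the paper: the paper likewise verifies the algebraic identity $\sum_{k}y_kF_k[z,x]+(x-z)^T(R^TR+S^T-S)=c(y,z)^T+(x-z)^TA(y,R,S)$ via \refh{coi:Def:QuadraticFk:Slope} and \refh{coi:DefCCOMPACT}, and then concludes that $s_2$ has the same structure as $s_1$ so that the Example's argument applies. You merely unfold that last citation by re-running the Example's interval-arithmetic and nonnegativity steps explicitly, which is the same argument in expanded form.
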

\begin{proof}
Since
$
\sum_{k=1}^m{y_kF_k[z,x]}
=
c(y,z)^T+(x-z)^TC(y)
$
due to
(\ref{coi:Def:QuadraticFk:Slope}) and
(\ref{coi:DefCCOMPACT}),
we obtain
$
\sum_{k=1}^m{y_kF_k[z,x]}+(x-z)^T(R^TR+S^T-S)
=
c(y,z)^T+(x-z)^TA(y,R,S)
$
due to (\ref{coi:DefCCOMPACT}),
which implies that $s_2$ from (\ref{coi:DefZ}) has the same structure as $s_1$ from (\ref{example:Def:sk2}), and consequently we obtain that (\ref{Def:phi:InclusionProperty}) holds for $s_2$, too.
\cbendRED
\end{proof}
\begin{proposition}
Let $F_k$ be quadratic, then we have for all $p\in\mathbb{R}^m$ and $\alpha\in\mathbb{R}$
\begin{equation}
C(y+\alpha p)
=
C(y)+\alpha C(p)
\komma\quad
c(y+\alpha p,z)
=
c(y,z)+\alpha c(p,z)
\label{coi:SatzCCOMPACT}
\end{equation}
and furthermore we have for all $\kappa\geq0$
\begin{equation}
A(\kappa^2y,\kappa R,\kappa^2S)
=
\kappa^2A(y,R,S)
\punkt
\label{coi:Proposition:Ascaling}
\end{equation}
\end{proposition}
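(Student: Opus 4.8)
The plan is to verify the three identities by direct substitution into the definitions in \refh{coi:DefCCOMPACT}, using only that $C$ and $c$ depend (affine-)linearly on $y$ and that the building blocks of $A$ scale homogeneously. The quadratic hypothesis on the $F_k$ plays no active role beyond guaranteeing that the matrices $C_k$ and vectors $c_k$ are the fixed data out of which $C$, $c$, and $A$ are assembled.

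First I would establish the additivity of $C$. Since $C(y)=\sum_{k=1}^m C_k y_k$ is a linear combination of the fixed matrices $C_k$ with the components of $y$ as coefficients, substituting $y+\alpha p$ and splitting the sum componentwise gives $C(y+\alpha p)=\sum_{k=1}^m C_k(y_k+\alpha p_k)=C(y)+\alpha C(p)$. This supplies the first identity immediately and will be reused below.

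Next I would treat $c$ by writing out $c(y+\alpha p,z)$ from its definition. The first summand $\sum_{k=1}^m c_k(y_k+\alpha p_k)$ splits exactly as the $C$-sum did, while for the second summand I substitute the just-proved expansion and use linearity of the transpose to obtain $C(y+\alpha p)+C(y+\alpha p)^T=\big(C(y)+C(y)^T\big)+\alpha\big(C(p)+C(p)^T\big)$. Collecting the $\alpha$-free terms and the $\alpha$-terms separately then reproduces $c(y,z)$ and $\alpha\, c(p,z)$, which is the second identity.

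Finally, for the scaling identity \refh{coi:Proposition:Ascaling} I would substitute $\kappa^2 y$, $\kappa R$, $\kappa^2 S$ into the definition of $A$ and factor $\kappa^2$ out of each of its three pieces: homogeneity of $C$ gives $C(\kappa^2 y)=\kappa^2 C(y)$, the quadratic term gives $(\kappa R)^T(\kappa R)=\kappa^2 R^TR$, and the skew part gives $(\kappa^2 S)^T-\kappa^2 S=\kappa^2(S^T-S)$; summing yields $\kappa^2 A(y,R,S)$. All three computations are elementary linearity and homogeneity manipulations, so there is no genuine obstacle. The only point demanding a little care is the cross term in $c$, where the expansion must be applied consistently to both the constant-vector part and the transpose-symmetrized matrix part — but this is handled cleanly by invoking the $C$-additivity proved in the first step.
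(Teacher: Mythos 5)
Your proposal is correct and follows essentially the same route as the paper, whose proof simply cites the definitions in \refh{coi:DefCCOMPACT}: you merely spell out the linearity of $C$ and $c$ in $y$ and the termwise $\kappa^2$-homogeneity of the three pieces of $A$ that the paper leaves implicit. No gaps; the expanded computations are exactly what the paper's one-line justification intends.
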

\begin{proof}
(\ref{coi:SatzCCOMPACT}) holds due to (\ref{coi:DefCCOMPACT}). (\ref{coi:Proposition:Ascaling}) holds due to (\ref{coi:DefCCOMPACT}) and (\ref{coi:SatzCCOMPACT}).
\qedhere
\end{proof}
\begin{proposition}
\label{coi:Proposition:ReduceScalabilityDependency}
Let $F_k$ be quadratic. If the positive function $T:\mathbb{R}^m\times\mathbb{R}^n\times\mathbb{R}^{n_1}\times\mathbb{R}^{n_0}\times\mathbb{R}^n\times\mathbb{R}^n\longrightarrow\mathbb{R}_{>0}$ satisfies the (partial) homogeneity condition
\begin{equation}
T(\kappa^2y,z,\kappa R,\kappa^2S,\underline{x},\overline{x})=\kappa^2T(y,z,R,S,\underline{x},\overline{x})
\label{coi:THomogen}
\end{equation}
for all $\kappa>0$, $y\in\mathbb{R}^m$, $z\in[\underline{x},\overline{x}]\in\mathbb{IR}^n$, $R\in\Triu{n}$ and $S\in\sTriu{n}$, then
the certificate $f$ from \refh{coi3:Def:f} is (partially) homogeneous
\begin{equation}
f(\kappa^2y,z,\kappa R,\kappa^2S,\underline{x},\overline{x})=f(y,z,R,S,\underline{x},\overline{x})
\punkt
\label{coi:Proposition:ReduceScalabilityDependency:Equation}
\end{equation}
\end{proposition}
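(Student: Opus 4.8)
The plan is to show that each of the three ingredients of the certificate $f$ from \refh{coi3:Def:f} --- the numerator term $Z$, the term $\max\big(0,\Nfunc(y,z)\big)$, and the denominator $T$ --- is positively homogeneous of degree two under the substitution $(y,R,S)\mapsto(\kappa^2y,\kappa R,\kappa^2S)$, so that the common factor $\kappa^2$ cancels between numerator and denominator and leaves $f$ unchanged.

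First I would treat $Z$. In the quadratic case \refh{coi3:Def:Z} reads $Z(y,z,R,S,\underline{x},\overline{x})=\sup s_2(y,z,R,S,\boldsymbol{x})$ with $s_2$ as in \refh{coi:DefZ}. Setting $y=\zeroVector{m}$ in the additivity relations \refh{coi:SatzCCOMPACT} (equivalently, taking the linear parts) yields the homogeneity $c(\kappa^2y,z)=\kappa^2c(y,z)$, and \refh{coi:Proposition:Ascaling} gives $A(\kappa^2y,\kappa R,\kappa^2S)=\kappa^2A(y,R,S)$. Substituting both into \refh{coi:DefZ} shows that the interval-valued expression $s_2$ is multiplied by $\kappa^2$; since $\kappa^2>0$, multiplying an interval by $\kappa^2$ multiplies its supremum by $\kappa^2$, whence $Z(\kappa^2y,z,\kappa R,\kappa^2S,\underline{x},\overline{x})=\kappa^2Z(y,z,R,S,\underline{x},\overline{x})$.

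Next I would handle $\Nfunc$. By \refh{coi3:Def:N} the map $y\mapsto\Nfunc(y,z)$ is an infimum of the linear forms $y^T\big(\tilde F-F(z)\big)$, hence positively homogeneous of degree one; as $\kappa^2>0$ this gives $\Nfunc(\kappa^2y,z)=\kappa^2\Nfunc(y,z)$, and pulling the nonnegative factor $\kappa^2$ out of the maximum yields $\max\big(0,\Nfunc(\kappa^2y,z)\big)=\kappa^2\max\big(0,\Nfunc(y,z)\big)$. Together with the previous step, the numerator of $f$ scales by $\kappa^2$. Finally, the hypothesis \refh{coi:THomogen} states exactly that $T$ scales by $\kappa^2$ as well. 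Dividing numerator by denominator, the two factors $\kappa^2$ cancel and \refh{coi:Proposition:ReduceScalabilityDependency:Equation} follows.

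I expect the only delicate point to be the supremum step for $Z$: one must observe that the scaling identities \refh{coi:SatzCCOMPACT} and \refh{coi:Proposition:Ascaling} act on the interval expression $s_2$ \emph{before} the outer $\sup$ is applied, and then invoke that $\sup(\kappa^2\,\boldsymbol{a})=\kappa^2\sup\boldsymbol{a}$ holds for an interval $\boldsymbol{a}$ precisely because $\kappa^2>0$ (the identity would fail for a negative factor, which is exactly why $R$ enters quadratically and only $\kappa>0$ is allowed). Everything else is a direct substitution using the degree-one homogeneity of the linear objects $c$, $C$ and $\Nfunc$.
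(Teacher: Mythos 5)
Your proposal is correct and follows essentially the same route as the paper's proof: it establishes $Z(\kappa^2y,z,\kappa R,\kappa^2S,\underline{x},\overline{x})=\kappa^2Z(y,z,R,S,\underline{x},\overline{x})$ via the scaling identities \refh{coi:SatzCCOMPACT} and \refh{coi:Proposition:Ascaling} applied to $s_2$, then uses the degree-one homogeneity of $y\mapsto\Nfunc(y,z)$ to pull $\kappa^2$ out of $\max\big(0,\Nfunc(\cdot,z)\big)$, and cancels against the hypothesis \refh{coi:THomogen} on $T$. The remark about $\sup(\kappa^2\boldsymbol{a})=\kappa^2\sup\boldsymbol{a}$ requiring $\kappa^2>0$ is a point the paper leaves implicit, but it is a correct and harmless elaboration.
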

\begin{proof}
Since $F_k$ is quadratic by assumption, the statements of Proposition \ref{coi:corollary:OriginalCertificate} hold. By using (\ref{coi:SatzCCOMPACT}) and (\ref{coi:Proposition:Ascaling}) we calculate
\begin{equation}
c(\kappa^2y,z)^T+(\boldsymbol{x}-z)^TA(\kappa^2y,\kappa R,\kappa^2S)
=
\kappa^2\big(c(y,z)^T+(\boldsymbol{x}-z)^TA(y,R,S)\big)
\label{coi:THomogen:Proof2}
\end{equation}
and therefore $Z(\kappa^2y,z,\kappa R,\kappa^2S,\underline{x},\overline{x})=\kappa^2Z(y,z,R,S,\underline{x},\overline{x})$ follows due to
\cbstartRED
(\ref{coi3:Def:Z}),
\cbendRED
(\ref{coi:DefZ}),
and (\ref{coi:THomogen:Proof2}). Furthermore, we have $\Nfunc(\kappa^2y,z)=\kappa^2\Nfunc(y,z)$ due to (\ref{coi3:Def:N}) and, hence, we obtain $\max{\big(0,\Nfunc(\kappa^2y,z)\big)}=\kappa^2\max{\big(0,\Nfunc(y,z)\big)}$. Consequently, (\ref{coi3:Def:f}) and (\ref{coi:THomogen}) imply (\ref{coi:Proposition:ReduceScalabilityDependency:Equation}).
\qedhere
\end{proof}
\begin{remark}
\label{coi:Remark:ReduceScalabilityDependency}
The intention of (\ref{coi:Proposition:ReduceScalabilityDependency:Equation}) is to reduce the scale dependence of the unbounded variables $y$, $R$ and $S$ of $f$.
If we go through the proof of Proposition \ref{coi:Proposition:ReduceScalabilityDependency} again, we notice that we use the scaling property (\ref{coi:Proposition:Ascaling}) of $A$ for showing (\ref{coi:THomogen:Proof2}). From the proof of (\ref{coi:Proposition:Ascaling}) we notice that this proof only holds, if $y$, $R$ and $S$ are treated as variables and none of them is treated as a constant (since factoring $\kappa^2$ out of a constant, yields an additional factor $\kappa^{-2}$ to the constant). Nevertheless, if one of the variables $y$, $R$ resp.~$S$ is treated as a constant and we set the corresponding value to $y=\zeroVector{m}$, $R=\zeroMatrix{n}$ resp.~$S=\zeroMatrix{n}$, then the proof still holds.
\end{remark}
\begin{example}
\label{AN:Example:CertificateZeroSet}
Consider the variables $R$ and $S$ as constants and set $R=S=\zeroMatrix{n}$. Then
$
T_1(y,z,R,S,\underline{x},\overline{x}):=1
$
does not satisfy (\ref{coi:THomogen}), while
$
T_2(y,z,R,S,\underline{x},\overline{x}):=\lvert y\rvert_{_2}
$
does (cf.~Remark \ref{coi:Remark:ReduceScalabilityDependency}). Note that
$T_2$
violates the requirement of positivity, as demanded in Definition \ref{AN:Definition:NfuncZf}, for $y=\zeroVector{m}$, and hence in this case $f$ is only defined outside the zero set of $T$.
Nevertheless, since the zero set of
$T_2$
has measure zero, it is numerically very unlikely to end up at a point of this zero set and therefore we will also consider this choice of $T$ due to the important fact of the reduction of the scale dependence of $f$ as mentioned in Remark \ref{coi:Remark:ReduceScalabilityDependency}
(also cf.~Subsection \ref{coi:idea:ExclusionBoxesStrategy} (directly after optimization problem (\ref{coi3:Optimierungsproblem})) and Example \ref{example:CSPandCertificateWitT1andTnormy}).
\end{example}
\begin{example}
Choose $n=2$, $m=2$, $c_1=\left(\begin{smallmatrix} 1    \\ -3    \end{smallmatrix}\right)$, $c_2=\left(\begin{smallmatrix} 4    \\  2    \end{smallmatrix}\right)$, $C_1=\left(\begin{smallmatrix} 2 & 0\\  3 & 4\end{smallmatrix}\right)$, $C_2=\left(\begin{smallmatrix}-1 & 0\\ -2 & 7\end{smallmatrix}\right)$, which yields $F_1(x)=2x_1^2+x_1+3x_1x_2-3x_2+4x_2^2$ and $F_2(x)=-x_1^2+4x_1-2x_1x_2+2x_2+7x_2^2$ due to
\cbstartRED(\ref{coi:DefFk})\cbendRED,
$\boldsymbol{x}=[-3,3]\times[-4,4]$ and $\boldsymbol{F}=[-1,7]\times[-2,0]$, then we can illustrate certificate $f$ from (\ref{coi3:Def:f}) with different $T$ from
Example \ref{AN:Example:CertificateZeroSet}
by the following plots\\
~\\
\begin{minipage}{.5\linewidth}
\begin{center}
\captionsetup{type=figure}
\includegraphics[width=5cm]{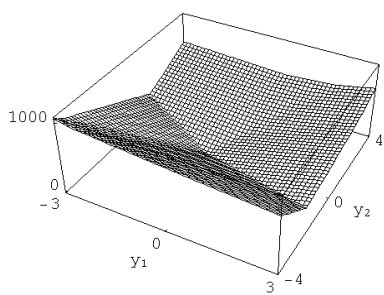}
\captionof{figure}{$f(y_1,y_2)$ for $T_1$}
\end{center}
\end{minipage}
\begin{minipage}{.5\linewidth}
\begin{center}
\captionsetup{type=figure}
\includegraphics[width=5cm]{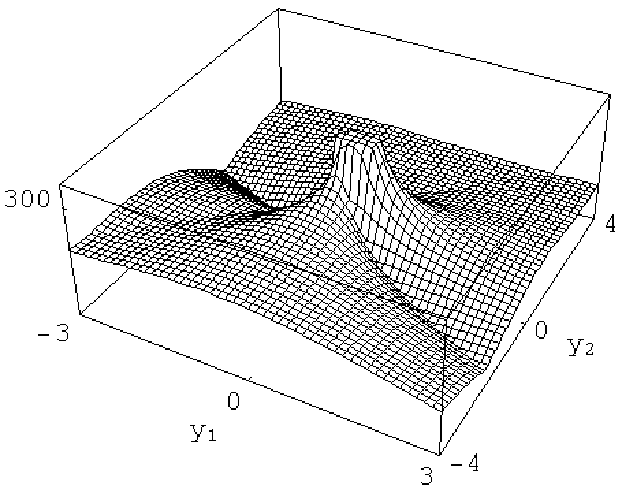}
\captionof{figure}{$f(y_1,y_2)$ for $T_2$}
\end{center}
\end{minipage}
\begin{minipage}{.5\linewidth}
\begin{center}
\captionsetup{type=figure}
\includegraphics[width=5cm]{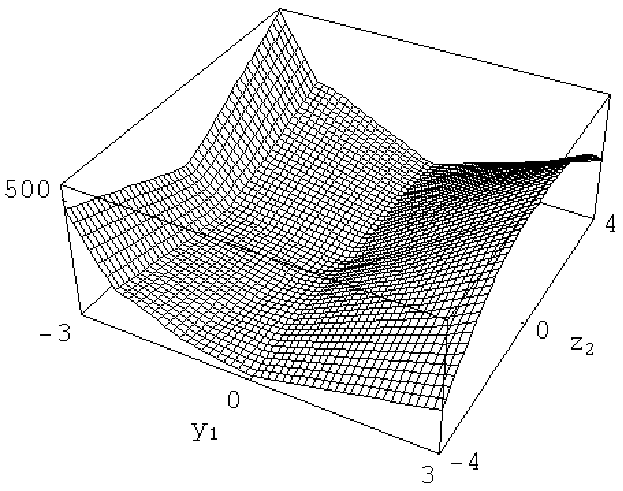}
\captionof{figure}{$f(y_1,z_2)$ for $T_1$}
\end{center}
\end{minipage}
\begin{minipage}{.5\linewidth}
\begin{center}
\captionsetup{type=figure}
\includegraphics[width=5cm]{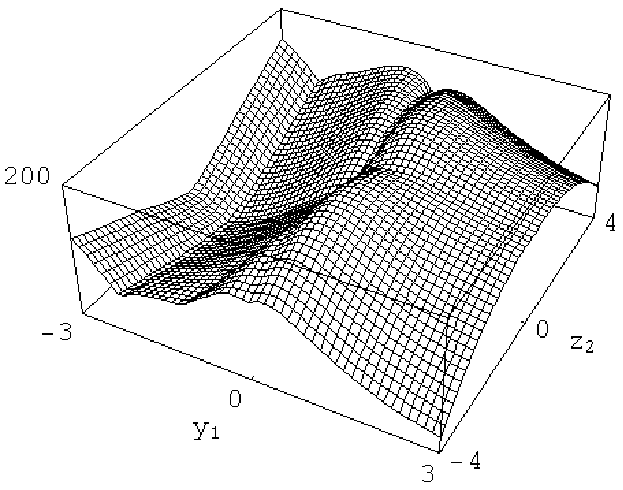}
\captionof{figure}{$f(y_1,z_2)$ for $T_2$}
\end{center}
\end{minipage}
\begin{minipage}{.5\linewidth}
\begin{center}
\captionsetup{type=figure}
\includegraphics[width=5cm]{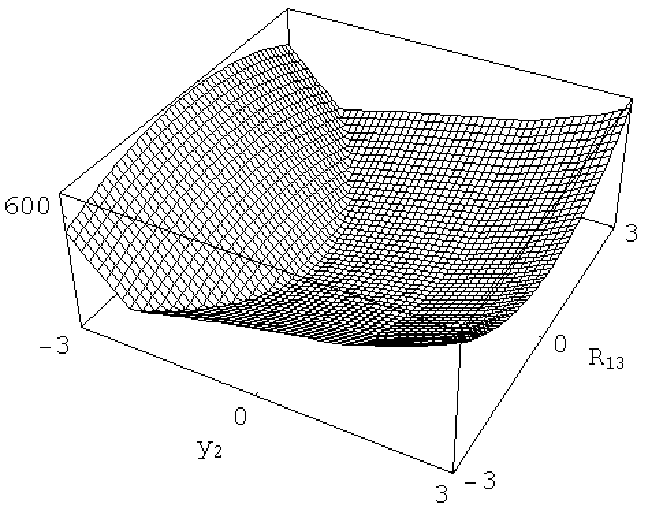}
\captionof{figure}{$f(y_2,R_{13})$ for $T_1$}
\end{center}
\end{minipage}
\begin{minipage}{.5\linewidth}
\begin{center}
\captionsetup{type=figure}
\includegraphics[width=5cm]{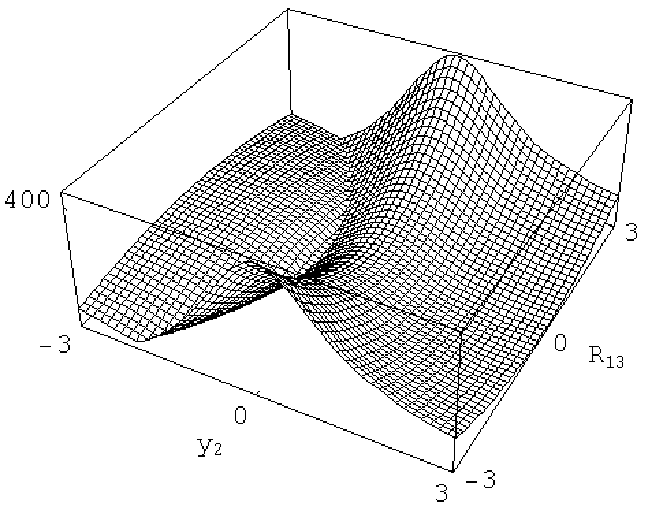}
\captionof{figure}{$f(y_2,R_{13})$ for $T_2$}
\end{center}
\end{minipage}
\begin{minipage}{.5\linewidth}
\begin{center}
\captionsetup{type=figure}
\includegraphics[width=5cm]{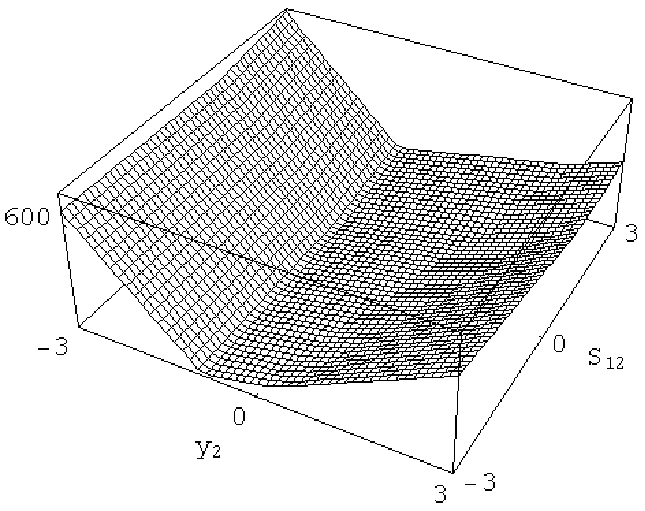}
\captionof{figure}{$f(y_2,S_{12})$ for $T_1$}
\end{center}
\end{minipage}
\begin{minipage}{.5\linewidth}
\begin{center}
\captionsetup{type=figure}
\includegraphics[width=5cm]{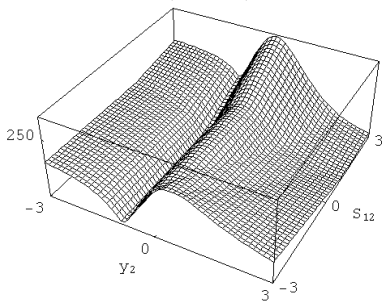}
\captionof{figure}{$f(y_2,S_{12})$ for $T_2$}
\end{center}
\end{minipage}
\end{example}

\subsection{Exclusion boxes for constraint satisfaction problems}
\label{coi:idea:ExclusionBoxesStrategy}
Now we explain in detail how to use Theorem \ref{coi3:Satz:UnzulässigkeitszertifikatVariableIntervallgrenze} for finding exclusion boxes for the CSP (\ref{coi:CSP}).
If we apply a solver for linearly constrained nonsmooth optimization (Note: The certificate $f$ from (\ref{coi3:Def:f}) is not differentiable everywhere due to Remark \ref{COI:Remark:CertificateNumberOfVariablesAndNonsmooth}) to
\begin{equation}
\begin{split}
&\min{f(y,z,\wVariable,u,v)}\cbmathRED\\
&\textnormal{ s.t. }u,v\in\boldsymbol{x}
\komma~
u+r\leq v
\komma~
z\in[u,v]\\
&\hphantom{\textnormal{ s.t. }}y\in\mathbb{R}^m,z,u,v\in\mathbb{R}^n,\wVariable\in\mathbb{R}^{\wDimension}\cbmathRED
\label{coi3:Optimierungsproblem}
\end{split}
\end{equation}
with a fixed
$r\in[\zeroVector{n},\overline{x}-\underline{x}]$ --- although the convergence theory of many solvers (cf., e.g., 
\HauptAlgorithmus by \citet[\GenaueAngabeConvergenceTheory]{HannesPaperB})
requires that all occurring functions are defined on the whole $\mathbb{R}^{\NdimCer}$, which might be violated for certain choices of $T$ (cf.~Example \ref{AN:Example:CertificateZeroSet}) --- and if there occurs a function value smaller than zero (during the optimization process), then there is no feasible point in $[u,v]$ according to Theorem \ref{coi3:Satz:UnzulässigkeitszertifikatVariableIntervallgrenze} and consequently we can reduce the box $\boldsymbol{x}$ to the set $\boldsymbol{x}\setminus[u,v]$ in the CSP (\ref{coi:CSP}).

If $[u,v]=\boldsymbol{x}$ (i.e.~$u$ and $v$ are fixed and therefore no variables), then we can reduce the box $\boldsymbol{x}$ to the empty set, i.e.~the reduction of a box to the empty set is equivalent to removing the box.

The constant $r$ determines the size of the box $[u,v]$, which should be excluded: The closer $r$ is to $\zeroVector{n}$, the smaller the box $[u,v]$ can become (if $r=\zeroVector{n}$, $[u,v]$ can become thin, what we want to prevent, since we want to remove a preferably large box $[u,v]$ out of $\boldsymbol{x}$, as then the remaining set $\boldsymbol{x}\setminus[u,v]$ is preferably small).

If during the optimization a point $z\in\boldsymbol{x}$ is found with $F(z)\in\boldsymbol{F}$, then we have found a feasible point and therefore we can stop the optimization, since then we cannot verify infeasibility for the box $\boldsymbol{x}$.
\begin{remark}
If $\boldsymbol{y}\subseteq\boldsymbol{x}$ and we remove $\boldsymbol{y}$ from $\boldsymbol{x}$, then the remaining set $\boldsymbol{x}\setminus\boldsymbol{y}$ is not closed. Nevertheless, if we just remove $\boldsymbol{y}^{\circ}\subset\boldsymbol{y}$, then the remaining set $\boldsymbol{x}\setminus\boldsymbol{y}^{\circ}\supset\boldsymbol{x}\setminus\boldsymbol{y}$ is closed (i.e.~we remove a smaller box and therefore the remaining set is a bit larger, since it contains the boundary of $\boldsymbol{y}$).
Furthermore, the set $\boldsymbol{x}\setminus\boldsymbol{y}$ can be represented as a union of at most $2n$ $n$-dimensional boxes, i.e.~in particular the number of boxes obtained by this splitting process is linear in $n$.
\end{remark}
We make the assumption that the certificate of infeasibility from (\ref{coi3:Def:f}) of the box $[\hat{u},\hat{v}]$ satisfies
$
f(\hat{y},\hat{z},\hat{\wVariable},\hat{u},\hat{v})=:\hat{\delta}<0
$,
i.e.~$[\hat{u},\hat{v}]$ is an exclusion box according to Theorem \ref{coi3:Satz:UnzulässigkeitszertifikatVariableIntervallgrenze}. For $\delta\in[\hat{\delta},0)$ and a box $\boldsymbol{x}$ with $[\hat{u},\hat{v}]\subseteq\boldsymbol{x}$, we can try to apply a solver for nonlinearly constrained nonsmooth optimization to
\begin{equation}
\begin{split}
&\min{b(y,z,\wVariable,u,v)}\cbmathRED\\
&\textnormal{ s.t. }f(y,z,\wVariable,u,v)\leq\delta\cbmathRED\\
&\hphantom{\textnormal{ s.t. }}u,v\in\boldsymbol{x}
\komma~
u\leq\hat{u}
\komma~
\hat{v}\leq v
\komma~
z\in[u,v]\\
&\hphantom{\textnormal{ s.t. }}y\in\mathbb{R}^m,z,u,v\in\mathbb{R}^n,\wVariable\in\mathbb{R}^{\wDimension}\cbmathRED
\label{coi3:OptimierungsproblemNB}
\end{split}
\end{equation}
to enlarge the exclusion box $[u,v]$ in $\boldsymbol{x}$, where $b:\mathbb{R}^{\NdimCer}\longrightarrow\mathbb{R}$ is a measure for the box $[u,v]$ in the following sense:
If $b:\mathbb{R}^{\NdimCer}\longrightarrow\Rneg$, then the following conditions must hold: If $[u,v]$ is small, then $b(.,u,v)$ is close to $0$ and if $[u,v]$ is large, then $b(.,u,v)$ is negative and large. This means: The larger the box $[u,v]$ is, the more negative $b(.,u,v)$ must be. For examples of this type of box measure cf.~(\ref{Ausschlussboxen:Def:boxMeasureNegative}).
Alternatively,
if
$b:\mathbb{R}^{\NdimCer}\longrightarrow\Rpos$, then the following condition must hold: If $[u,v]\subseteq\boldsymbol{x}$ is close to $\boldsymbol{x}$, then $b(.,u,v)$ is close to $0$. For examples of this type of box measure cf.~(\ref{Ausschlussboxen:Def:boxMeasurePositive}).
\begin{remark}
In opposite to (\ref{coi3:Optimierungsproblem}), where the linear constraint $u+r\leq v$ occurs, we use in (\ref{coi3:OptimierungsproblemNB}) the bound constraints $u\leq\hat{u}$ and $\hat{v}\leq v$.

Furthermore,
we
make the following
two
observations for the global optimization problem
\begin{equation}
\begin{split}
&\min_{x}{F_{\mathrm{obj}}(x)}\\
&          \textnormal{ s.t. } F_i(x)\in\boldsymbol{F}_i~~~\forall i=2,\dots,m\\
&\hphantom{\textnormal{ s.t. }}  x \in\boldsymbol{x}\komma
\label{AN:Remark:CertificateForGlobalOptimizationProblem}
\end{split}
\end{equation}
where $F_{\mathrm{obj}}:\mathbb{R}^n\longrightarrow\mathbb{R}$:
First of all,
the
certificate $f$ from (\ref{coi3:Def:f}) can be used for finding exclusion boxes in the global optimization problem (\ref{AN:Remark:CertificateForGlobalOptimizationProblem}) with an arbitrary objective function $F_{\mathrm{obj}}$, since the certificate $f$ only depends on the constraint data $F$, $\boldsymbol{F}$ and $\boldsymbol{x}$ (cf.~the CSP (\ref{coi:CSP})) and since a solution of an optimization problem is necessarily feasible.
Secondly,
we
denote the current lowest known function value of
the
optimization problem
(\ref{AN:Remark:CertificateForGlobalOptimizationProblem})
by $F_{\mathrm{obj}}^{\mathrm{cur}}$. Now, if we can find a box $[u,v]\subseteq{x}$ for which the certificate $f$ from (\ref{coi3:Def:f}) with
$\boldsymbol{F}_1:=[-\infty,F_{\mathrm{obj}}^{\mathrm{cur}}]$
has a negative value, then Theorem \ref{coi3:Satz:UnzulässigkeitszertifikatVariableIntervallgrenze} implies that for all $x\in[u,v]$ there exists $k\in\lbrace1,\dots,m\rbrace$ with $F_k(x)\not\in\boldsymbol{F}_k$, which is equivalent that for all $x\in[u,v]$
we have
$F_{\mathrm{obj}}^{\mathrm{cur}}<F_1(x)$ or there exists $i\in\lbrace2,\dots,m\rbrace$ with $F_i(x)\not\in\boldsymbol{F}_i$, i.e.~any point in the box $[u,v]$ has an objective function value which is higher than the current lowest known function value $F_{\mathrm{obj}}^{\mathrm{cur}}$ or is infeasible. Consequently, the box $[u,v]$ cannot contain a feasible point with function value lower equal $F_{\mathrm{obj}}^{\mathrm{cur}}$, and hence the box $[u,v]$ cannot contain a global minimizer of the global optimization problem (\ref{AN:Remark:CertificateForGlobalOptimizationProblem}). Therefore we can exclude the box $[u,v]$ from further consideration.
\end{remark}
\begin{example}
For measuring the box $[u,v]$ with $b:\mathbb{R}^{\NdimCer}\longrightarrow\Rneg$, we can use any negative $p$-norm with $p\in[1,\infty]$ as well as variants of them
\begin{equation}
\begin{aligned}
b_-^1(y,z,\wVariable,u,v)
&:=
-\lvert v-u\rvert_{_1}
\cbmathRED
\komma\qquad
b_-^2(y,z,\wVariable,u,v)
:=
-\tfrac{1}{2}\lvert v-u\rvert_{_2}^2
\cbmathRED
\komma~\\
b_-^{\infty}(y,z,\wVariable,u,v)
&:=
-\lvert v-u\rvert_{_{\infty}}
\punkt
\cbmathRED
\label{Ausschlussboxen:Def:boxMeasureNegative}
\end{aligned}
\end{equation}
For measuring the box $[u,v]$ with $b:\mathbb{R}^{\NdimCer}\longrightarrow\Rpos$, we can use any $p$-norm with $p\in[1,\infty]$ as well as variants of them
\begin{equation}
\begin{aligned}
b_+^1(y,z,\wVariable,u,v)
&:=
\lvert
\left(
\begin{smallmatrix}
u-\underline{x}\\
v-\overline{x}
\end{smallmatrix}
\right)
\rvert_{_1}
\cbmathRED
\komma\qquad
b_+^2(y,z,\wVariable,u,v)
:=
\tfrac{1}{2}
\lvert
\left(
\begin{smallmatrix}
u-\underline{x}\\
v-\overline{x}
\end{smallmatrix}
\right)
\rvert_{_2}^2
\cbmathRED
\komma~\\
b_+^{\infty}(y,z,\wVariable,u,v)
&:=
\lvert
\left(
\begin{smallmatrix}
u-\underline{x}\\
v-\overline{x}
\end{smallmatrix}
\right)
\rvert_{_{\infty}}
\punkt
\cbmathRED
\label{Ausschlussboxen:Def:boxMeasurePositive}
\end{aligned}
\end{equation}
$b_-$ is concave, while $b_+$ is convex. $b_-$ can be used for unbounded $\boldsymbol{x}$, while this is not possible for $b_+$. $b^2$ is smooth, while $b^1$ and $b^{\infty}$ are not differentiable.
$b^1$ has an equal growing rate for all components. The growing rate of $b^2$ depends on $\sgn\big(\tfrac{1}{2}\lvert.\rvert_{_2}^2-1\big)$. $b^{\infty}$ already grows, if the absolute value of the largest components grows.
\end{example}
\begin{example}
\label{example:CSPandCertificateWitT1andTnormy}
Choose $n=1$, $m=1$,
$\wVariable=0$,
$c_1=1$, $C_1=\tfrac{1}{2}$, which yields $F(x)=\tfrac{1}{2}x^2+x$ due to (\ref{coi:DefFk}), as well as $\boldsymbol{x}=[-1,2]$ and consider two CSPs (\ref{coi:CSP}) with $\boldsymbol{F}^1=[-2, 1]$ resp.~$\boldsymbol{F}^2=[-2,-1]$ which yield the following two graphics\\
\begin{minipage}{.5\linewidth}
\begin{center}
\captionsetup{type=figure}
\includegraphics[width=3cm]{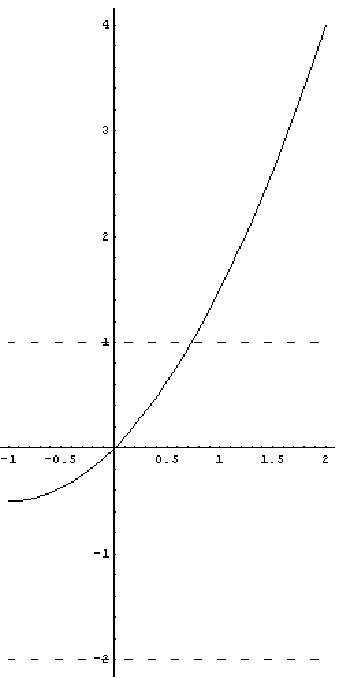}
\captionof{figure}{$F$ for $\boldsymbol{F}^1$}
\end{center}
\end{minipage}
\begin{minipage}{.5\linewidth}
\begin{center}
\captionsetup{type=figure}
\includegraphics[width=3cm]{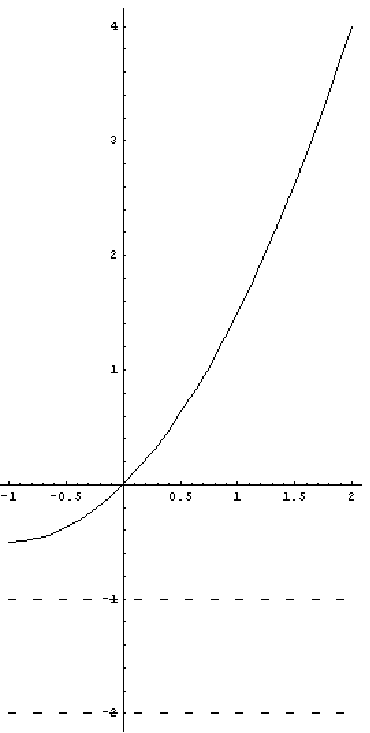}
\captionof{figure}{$F$ for $\boldsymbol{F}^2$}
\end{center}
\end{minipage}
from which we can see that the CSP has feasible points for $\boldsymbol{F}^1$, while it is infeasible for $\boldsymbol{F}^2$. The corresponding certificates $f$ from (\ref{coi3:Def:f}), where we only consider the variables $y\in\mathbb{R}$ and $z\in\boldsymbol{x}$ as well as different $T$ from
Example \ref{AN:Example:CertificateZeroSet},
and where we denote the function value of a local minimizer of the optimization problem (\ref{coi3:Optimierungsproblem}) by $\hat{f}$, can be illustrated by the following plots\\
\begin{minipage}{.5\linewidth}
\begin{center}
\captionsetup{type=figure}
\includegraphics[width=5cm]{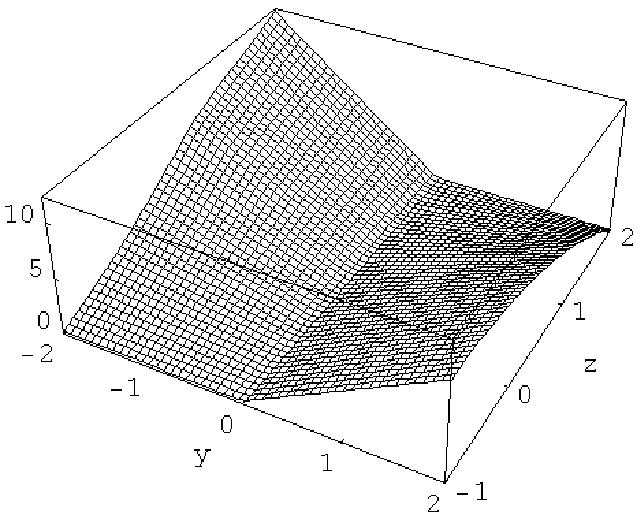}
\captionof{figure}{$f$ for $(\boldsymbol{F}^1$,$T_1)$ $\Longrightarrow$ $\hat{f}=0$}
\end{center}
\end{minipage}
\begin{minipage}{.5\linewidth}
\begin{center}
\captionsetup{type=figure}
\includegraphics[width=5cm]{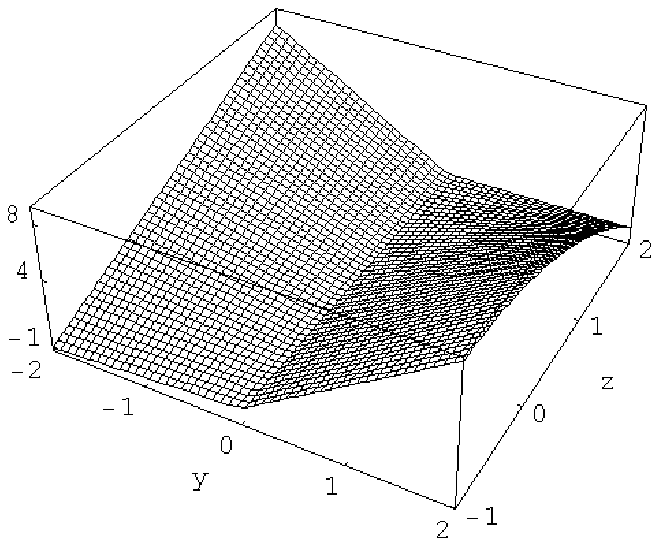}
\captionof{figure}{$f$ for $(\boldsymbol{F}^2,T_1)$ $\Longrightarrow$ $\hat{f}=-1$}
\end{center}
\end{minipage}
\begin{minipage}{.5\linewidth}
\begin{center}
\captionsetup{type=figure}
\includegraphics[width=5cm]{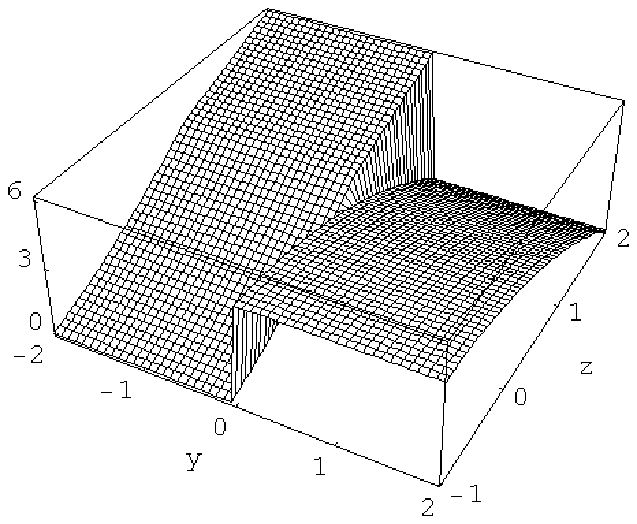}
\captionof{figure}{$f$ for $(\boldsymbol{F}^1,T_2)$ $\Longrightarrow$ $\hat{f}=0$}
\label{Summary02Bsp01FeasibleCertificateNorm}
\end{center}
\end{minipage}
\begin{minipage}{.5\linewidth}
\begin{center}
\captionsetup{type=figure}
\includegraphics[width=5cm]{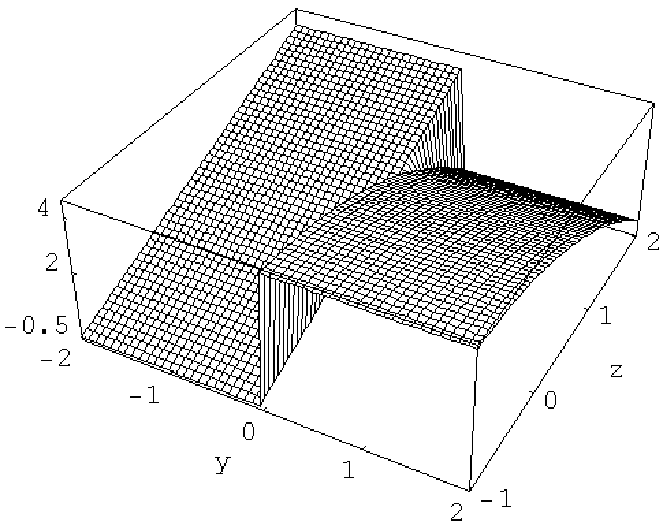}
\captionof{figure}{$f$ for $(\boldsymbol{F}^2,T_2)$ $\Longrightarrow$ $\hat{f}=-\tfrac{1}{2}\punkt$}
\label{Figure:Summary02Bsp01InfeasibleCertificateNorm}
\end{center}
\end{minipage}
We see from Figure \ref{Summary02Bsp01FeasibleCertificateNorm} and Figure \ref{Figure:Summary02Bsp01InfeasibleCertificateNorm} that the certificate $f$ is not defined for $y=0$ due to the definition of $T_2$ in
Example \ref{AN:Example:CertificateZeroSet}.
\end{example}
\section{Starting point}
\label{Paper:ImplementationIssuesForFindingExclusionBoxes}
We implemented the suggestions from 
Subsection
\ref{coi:idea:ExclusionBoxesStrategy} in GloptLab by \citet{Domes} which is a configurable MATLAB framework for computing the 
global solution of a 
quadratic
CSP (\ref{coi:CSP}), i.e.~with $F_k$ from (\ref{coi:DefFk}). The matrices $C_k\in\mathbb{R}^{n\times n}$ are lower triangular in GloptLab
\begin{equation}
C_k\in\Tril{n}
:=
\lbrace
A\in\mathbb{R}^{n\times n}:A_{ij}=0\textnormal{ for }i<j
\rbrace
\punkt
\label{coi:VorUntereDreiecksmatrix}
\end{equation}
For running GloptLab the MATLAB toolbox INTLAB by \citet{Rump}, \texttt{lp\_solve} by \citet{Berkelaar}, SEDUMI by \citet{Sturm,Polik} as well as SDPT3 by \citet{SDPT3} were installed for using all features of GloptLab.

\emptyh{
\subsection{Linearly constrained case}
Without loss of generality, we make the following assumptions for the optimization problem (\ref{coi3:Optimierungsproblem}):
If $u$ is not minimized, then we set
\begin{equation}
u\equiv u^0:=\underline{x}\punkt
\label{coi3:uBed}
\end{equation}
If $v$ is not minimized, then we set
\begin{equation}
v\equiv v^0:=\overline{x}\punkt
\label{coi3:vBed}
\end{equation}
The starting point is assumed to satisfy
\begin{equation}
z^0\in[u^0,v^0]
\label{coi3:z0Bed}
\end{equation}
(which is satisfied for the choice (\ref{coi:StartinPoint:zMidpoint})).
\begin{remark}
The choices (\ref{coi3:uBed}) resp.~(\ref{coi3:vBed}) are justified, since otherwise $\boldsymbol{x}$ can be replaced by $[u,\overline{x}]$, $[\underline{x},v]$ resp.~$[u,v]$ anyway.
The choice (\ref{coi3:z0Bed}) ensures that the starting point component $z^0$ for the $z$-component is feasible for the optimization problem (\ref{coi3:Optimierungsproblem}).
\end{remark}
\begin{proposition}
The constraints on the variables $z,u,v$ of the optimization problem \refh{coi3:Optimierungsproblem} satisfy
\begin{align*}
\begin{array}{c|c|c|l|l}
\multicolumn{1}{c|}{\opt(z)}&\multicolumn{1}{c|}{\opt(u)}&\multicolumn{1}{c|}{\opt(v)}
&\multicolumn{1}{c|}{\textnormal{Bound constraints}}&\multicolumn{1}{c}{\textnormal{Linear constraints}}\\
\cline{1-5}
0 & 0 & 0 & \textnormal{none}                                     & \textnormal{none}\\
0 & 1 & 0 & u\in[\underline{x},\min{(z,\overline{x}-r)}]          & \textnormal{none}\\
0 & 0 & 1 & v\in[\max{(\underline{x}+r,z)},\overline{x}]          & \textnormal{none}\\
0 & 1 & 1 & u\in[\underline{x},z],v\in[z,\overline{x}]            & 
B_2\left(\begin{smallmatrix}u\\v\end{smallmatrix}\right)\leq-r\\
1 & 0 & 0 & z\in\boldsymbol{x}                                    & \textnormal{none}\\
1 & 1 & 0 & z\in\boldsymbol{x},u\in[\underline{x},\overline{x}-r] & 
B_1\left(\begin{smallmatrix}z\\u\end{smallmatrix}\right)\leq\zeroVectorI{n}\\
1 & 0 & 1 & z\in\boldsymbol{x},v\in[\underline{x}+r,\overline{x}] & 
B_2\left(\begin{smallmatrix}z\\v\end{smallmatrix}\right)\leq\zeroVectorI{n}\\
1 & 1 & 1 & u,v\in\boldsymbol{x}                                  & 
\left(
\begin{smallmatrix}
B_3\\
\zeroMatrixI{n}~B_2
\end{smallmatrix}
\right)
\left(\begin{smallmatrix}z\\u\\v\end{smallmatrix}\right)\leq\left(\begin{smallmatrix}\zeroVectorI{2n}\\-r\end{smallmatrix}\right)
\komma
\end{array}
\end{align*}
where
\begin{equation}
\begin{split}
\opt(x)
&:=
\left\lbrace
\begin{array}{ll}
1 & \textnormal{if }x\textnormal{ is optimized}\\
0 & \textnormal{if }x\textnormal{ is not optimized}
\end{array}
\right.
\\
B_1
&:=
\left(
\begin{smallmatrix}
-\idMatrix{n} & \idMatrix{n}
\end{smallmatrix}
\right)
\komma\quad
B_2
:=
-B_1
\komma\quad
B_3
:=
\left(
\begin{smallmatrix}
          - \idMatrix{n} & \idMatrix{n}    & \zeroMatrixI{n}\\
\hphantom{-}\idMatrix{n} & \zeroMatrixI{n} & -\idMatrix{n}
\end{smallmatrix}
\right)
\punkt
\label{PaperC:DataForImplementingTheOptimizationProblemsConcerningExclusionBoxes}
\end{split}
\end{equation}
In particular, the constraints with $r_i=0$ can be dropped.
\end{proposition}
\begin{proof}
A lengthy, but easy investigation of the constraints of optimization problem (\ref{coi3:Optimierungsproblem}) yields the desired result.
\qedhere
\end{proof}
\begin{remark}
If we want to apply \HauptAlgorithmus
by \citet[\GenaueAngabeThree]{HannesPaperA} for solving optimization problem (\ref{coi3:Optimierungsproblem}) and we want to to use a solver for computing the search direction in \HauptAlgorithmus which needs a strictly feasible starting point (e.g., \texttt{socp} by \citet{BoydSOCP1995}), the initial choices of $u$ and $v$ must satisfy $u,v\in(\underline{x},\overline{x})$ and $u+r<v$ as, e.g.,
\begin{equation}
u:=(1-t_0)\underline{x}+t_0(\overline{x}-r)
\komma\quad
v:=(1-t_1)\underline{x}+t_1(\overline{x}-r)
\label{coi:Startpunkt:Satz:uANDv}
\end{equation}
for some fixed $t_0,t_1\in(0,1)$ with $t_0<t_1$.
\end{remark}

\subsection{Nonlinearly constrained case}
A graphical illustration of the bound constraints of optimization problem (\ref{coi3:OptimierungsproblemNB})
\begin{equation*}
u,v\in \boldsymbol{x}\komma
u\leq\hat{u}\komma
\hat{v}\leq v
~\Longleftrightarrow~
u\in[\underline{x},\hat{u}]\komma
v\in[\hat{v},\overline{x}] \komma
\end{equation*}
is given by
\begin{center}
\includegraphics[width=10cm]{AusschlussboxOptimierungsproblemSchrankenNBIllustration.eps}
\end{center}
\begin{proposition}
The constraints on the variables $z,u,v$ of the optimization problem \refh{coi3:OptimierungsproblemNB} satisfy
\begin{equation*}
\begin{split}
\begin{array}{c|c|c|l|l}
\multicolumn{1}{c|}{\opt(z)}&
\multicolumn{1}{c|}{\opt(u)}&
\multicolumn{1}{c|}{\opt(v)}&
\multicolumn{1}{c}{\textnormal{Bound constraints}}&
\multicolumn{1}{c}{\textnormal{Linear constraints}}\\
\cline{1-5}
0 & 1 & 0 & u\in[\underline{x},\hat{u}]                            & \textnormal{none}   \\
0 & 0 & 1 & v\in[\hat{v},\overline{x}]                             & \textnormal{none}   \\
0 & 1 & 1 & u\in[\underline{x},\hat{u}],v\in[\hat{v},\overline{x}] & \textnormal{none}   \\
1 & 1 & 0 & z\leq\overline{x},u\in[\underline{x},\hat{u}]          &
B_1\left(\begin{smallmatrix}z\\ u\end{smallmatrix}\right)\leq\zeroVectorI{n}\\
1 & 0 & 1 & \underline{x}\leq z,v\in[\hat{v},\overline{x}]         &
B_2\left(\begin{smallmatrix}z\\ v\end{smallmatrix}\right)\leq\zeroVectorI{n}\\
1 & 1 & 1 & u\in[\underline{x},\hat{u}],v\in[\hat{v},\overline{x}] &
B_3\left(\begin{smallmatrix}z\\ u\\ v\end{smallmatrix}\right)\leq\zeroVectorI{2n}
\punkt
\end{array}
\end{split}
\end{equation*}
\end{proposition}
\begin{proof}
Using (\ref{PaperC:DataForImplementingTheOptimizationProblemsConcerningExclusionBoxes}), a lengthy, but easy investigation of the constraints of (\ref{coi3:OptimierungsproblemNB}) yields the desired result.
\begin{remark}
If the search direction in \HauptAlgorithmus by \citet[\GenaueAngabeFour]{HannesPaperA} is computed by a solver that needs a strictly feasible starting point (e.g., \texttt{socp} by \citet{BoydSOCP1995}), then we must slightly perturb
the bound constraints for $u$ and $v$ in (\ref{coi3:OptimierungsproblemNB}): Choose $\varepsilon\in\big(\zeroVector{n},\tfrac{1}{2}(\hat{v}-\hat{u})\big)$ (which guarantees that $[\hat{u}+\varepsilon,\hat{v}-\varepsilon]$ is a box) near $\zeroVector{n}$ and consider instead of (\ref{coi3:OptimierungsproblemNB}) the optimization problem
\begin{equation}
\begin{split}
&\min{b(y,z,R,S,u,v)}\\
&\textnormal{ s.t. }f(y,z,R,S,u,v)\leq\delta\\
&\hphantom{\textnormal{ s.t. }}u\in[\underline{x},\hat{u}+\varepsilon]\\
&\hphantom{\textnormal{ s.t. }}v\in[\hat{v}-\varepsilon,\overline{x}]\\
&\hphantom{\textnormal{ s.t. }}z\in[u,v]\\
&\hphantom{\textnormal{ s.t. }}y\in\mathbb{R}^m,\wVariable\in\mathbb{R}^{\wDimension}\cbmathRED
\label{coi3:OptimierungsproblemNBStoerung}
\end{split}
\end{equation}
for which $\hat{u}$ and $\hat{v}$ are strictly feasible.
\end{remark}
}

\cbstartRED

So the last issue that remains to be discussed is, how to
find a point $(y,z,\wVariable,u,v)$ being feasible for the linearly constrained optimization problem (\ref{coi3:Optimierungsproblem}) with
$f(y,z,\wVariable,u,v)<0$
quickly. For this we need a good starting point $(y^0,z^0,\wVariable^0,u^0,v^0)$ and therefore we must take the following observations into account:
$y^0$ and $z^0$ should be chosen so, that $\Nfunc(y^0,z^0)$ is positive,
and
$(y^0,z^0,\wVariable^0,u^0,v^0)$ should be chosen so, that the term $Z(y^0,z^0,\wVariable^0,u^0,v^0)$, which is non-negative due to (\ref{coi3:SatzZnichtnegativ}), is near zero.
These facts lead to the following suggestions for choosing a starting point $(y^0,z^0,\wVariable^0,u^0,v^0)$\cbendRED:
First of all,
if
the solver in use can only handle strictly feasible bound/linear constraints (e.g., \HauptAlgorithmus by \citet[\GenaueAngabeFive]{HannesPaperA} with using \texttt{socp} by \citet{BoydSOCP1995} for computing the search direction),
then the initial choices of $u^0$ and $v^0$ must satisfy $u^0,v^0\in(\underline{x},\overline{x})$ and $u^0+r<v^0$, e.g.,
$u^0:=(1-t_0)\underline{x}+t_0(\overline{x}-r)$
and
$v^0:=(1-t_1)\underline{x}+t_1(\overline{x}-r)$
for some fixed $t_0,t_1\in(0,1)$ with $t_0<t_1$.
Otherwise (e.g., SolvOpt by \citet{SolvOptPublication} or \HauptAlgorithmus with using MOSEK for computing the search direction) we take the endpoints of $\boldsymbol{x}$ for $u^0$ and $v^0$.
Secondly,
the
natural choice for the starting value of $z\in[u,v]\subseteq\boldsymbol{x}$ is the midpoint
$
z^0:=\tfrac{1}{2}(u^0+v^0)
$
of the box $[u^0,v^0]$.
Thirdly,
to
get the term $\max{\big(0,\Nfunc(y,z)\big)}$ in the certificate $f$ from (\ref{coi3:Def:f}) as large as possible, we make the following choices:
For the case $\underline{F}_k=-\infty$ resp.~the case $\overline{F}_k=\infty$ resp.~the case that both $\underline{F}_k$ and $\overline{F}_k$ are finite, we choose
\begin{equation}
\begin{split}
y_k^0:=
\left\lbrace
\begin{array}{ll}
          - 1 & \textnormal{if }\overline{F}_k<F_k(z^0)\\
\hphantom{-}0 & \textnormal{else}
\end{array}
\right.
\komma~
y_k^0:=
\left\lbrace
\begin{array}{ll}
1 & \textnormal{if }F_k(z^0)<\underline{F}_k\\
0 & \textnormal{else}
\end{array}
\right.
\komma~
y_k^0:=
\left\lbrace
\begin{array}{ll}
\hphantom{-}1 & \textnormal{if }F_k(z^0)<\underline{F}_k\\
          - 1 & \textnormal{if }\overline{F}_k<F_k(z^0)\\
\hphantom{-}0 & \textnormal{else}
\end{array}
\right.
\label{coi:yStartpunktFLunboundedCOMPACT}
\end{split}
\end{equation}
respectively due to (\ref{coi:SatzNexplizit}) and (\ref{coi:Proposition:NyzEqualsMinusInfty}).
Finally,
for
the choices of $R$ and $S$ we refer to Proposition \ref{coi:StartinPoint:Proposition:Matrices}.
\begin{remark}
If we choose $T=T_2$ (cf.~Example \ref{AN:Example:CertificateZeroSet}),
then
\cbstartRED
$T(y,z,\wVariable,u,v)=0~\Longleftrightarrow~y=\zeroVector{m}$.
\cbendRED
Therefore, if $y^0=\zeroVector{m}$ occurs as starting point, then we have a feasible point $F(z)\in\boldsymbol{F}$ due to
(\ref{coi:yStartpunktFLunboundedCOMPACT}).
Furthermore, we can expect that no solver should have difficulties with this choice of $T$ because of the small size of the zero set of $T$ due to Example \ref{AN:Example:CertificateZeroSet}.
\end{remark}
In the following we will make use of the MATLAB operators $\textnormal{diag}$, $\textnormal{tril}$ and $\textnormal{triu}$.
\begin{proposition}
\label{coi:StartinPoint:Proposition:Matrices}
Let $F_k$ be quadratic and let \refh{coi:VorUntereDreiecksmatrix} be satisfied. Choose any $y\in\mathbb{R}^m$ and consider the modified Cholesky factorization
\begin{equation}
\hat{A}=\hat{R}^T\hat{R}-D
\label{coi:StartinPoint:Proposition:Matrices:AhatChol}
\end{equation}
of $\hat{A}$ (with $\hat{R}\in\Triu{n}$ and the non-negative diagonal matrix $D\in\mathbb{R}^{n\times n}$), where
\begin{equation}
\hat{A}
:=
C(y)+S^T-S
\in\Sym{n}
\komma\quad
S
:=
-\tfrac{1}{2}\textnormal{triu}\big(C(y)^T,1\big)\in\sTriu{n}
\label{coi:StartinPoint:Proposition:Matrices:AhatCOMPACT}
\end{equation}
and $\Sym{n}$ denotes the space of all symmetric $n\times n$-matrices.
Then
\begin{equation}
\hat{A}
=
C(y)-\tfrac{1}{2}\textnormal{tril}\big(C(y),-1\big)+\tfrac{1}{2}\textnormal{triu}\big(C(y)^T,1\big)
\komma\quad
\textnormal{diag}(\hat{A})
=
\textnormal{diag}\big(C(y)\big)
\punkt
\label{coi:StartinPoint:Proposition:Matrices:AhatAlternativeRepresentationCOMPACT}
\end{equation}
Furthermore,
if
we set
\begin{equation}
R:=D^{\frac{1}{2}}
\komma
\label{coi:StartinPoint:Proposition:Matrices:R}
\end{equation}
then
$
A(y,R,S)=\hat{R}^T\hat{R}\in\Sym{n}
$.
\end{proposition}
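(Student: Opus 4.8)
The plan is to exploit the lower-triangular structure of $C(y)$ twice: first to rewrite $\hat A$ and read off its diagonal, and then to collapse $A(y,R,S)$ onto the modified Cholesky factor. First I would record that, since each $C_k\in\Tril{n}$ by \refh{coi:VorUntereDreiecksmatrix}, the matrix $C(y)=\sum_{k=1}^m C_ky_k$ is lower triangular; hence $C(y)^T$ is upper triangular and its strictly upper part is the transpose of the strictly lower part of $C(y)$, i.e. $\textnormal{triu}\big(C(y)^T,1\big)=\big(\textnormal{tril}(C(y),-1)\big)^T$. Inserting the definition of $S$ from \refh{coi:StartinPoint:Proposition:Matrices:AhatCOMPACT} into $\hat A=C(y)+S^T-S$ then gives $S^T=-\tfrac12\textnormal{tril}(C(y),-1)$ and $-S=\tfrac12\textnormal{triu}\big(C(y)^T,1\big)$, which is exactly the first identity in \refh{coi:StartinPoint:Proposition:Matrices:AhatAlternativeRepresentationCOMPACT}. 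Because $S$ and $S^T$ are strictly triangular with vanishing diagonal, the diagonal identity $\textnormal{diag}(\hat A)=\textnormal{diag}\big(C(y)\big)$ is then immediate.

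Next I would verify the asserted symmetry $\hat A\in\Sym{n}$. Splitting $C(y)=\textnormal{diag}\big(C(y)\big)+\textnormal{tril}(C(y),-1)$ and $C(y)^T=\textnormal{diag}\big(C(y)\big)+\textnormal{triu}\big(C(y)^T,1\big)$, one obtains $C(y)-C(y)^T=\textnormal{tril}(C(y),-1)-\textnormal{triu}\big(C(y)^T,1\big)=2(S-S^T)$. Transposing $\hat A=C(y)+S^T-S$ gives $\hat A^T=C(y)^T+S-S^T$, so that $\hat A-\hat A^T=\big(C(y)-C(y)^T\big)+2(S^T-S)=0$, confirming $\hat A\in\Sym{n}$.

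Finally, set $R:=D^{\frac12}$ as in \refh{coi:StartinPoint:Proposition:Matrices:R}. Since $D$ is a non-negative diagonal matrix, $R$ is diagonal, hence $R\in\Triu{n}$, and $R^TR=D^{\frac12}D^{\frac12}=D$. Substituting into the definition $A(y,R,S)=C(y)+R^TR+S^T-S$ from \refh{coi:DefCCOMPACT} yields
\[
A(y,R,S)=C(y)+D+S^T-S=\hat A+D,
\]
and the modified Cholesky factorization \refh{coi:StartinPoint:Proposition:Matrices:AhatChol} gives $\hat A+D=\hat R^T\hat R$. As $\hat R^T\hat R$ is symmetric, we conclude $A(y,R,S)=\hat R^T\hat R\in\Sym{n}$, as claimed.

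The main obstacle will be the routine but error-prone $\textnormal{triu}$/$\textnormal{tril}$ bookkeeping: keeping track of which strictly triangular block each term lands in, and checking that $S^T-S$ reconstitutes precisely the skew-symmetric part $\tfrac12\big(C(y)^T-C(y)\big)$ that cancels the off-diagonal mismatch and forces symmetry. Once $R^TR=D$ is recognized, the remainder is a one-line substitution into \refh{coi:StartinPoint:Proposition:Matrices:AhatChol}.
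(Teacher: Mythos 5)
Your proposal is correct and follows essentially the same route as the paper's proof: establish that $C(y)$ is lower triangular so that the $\textnormal{triu}/\textnormal{tril}$ identities give the alternative representation of $\hat A$ and its diagonal, verify $\hat A=\hat A^T$ by the same strictly-triangular bookkeeping, and then substitute $R^TR=D$ into $A(y,R,S)=C(y)+R^TR+S^T-S$ together with the modified Cholesky factorization to get $\hat R^T\hat R$. Your observation that $S^T-S=\tfrac12\big(C(y)^T-C(y)\big)$ makes the symmetry step a touch more transparent than the paper's chain of identities, but it is the same computation.
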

\begin{proof}
Since $F_k$ is quadratic by assumption, the statements of Proposition \ref{coi:corollary:OriginalCertificate} hold.
Since $C(y)$ is lower triangular due to (\ref{coi:VorUntereDreiecksmatrix}) and (\ref{coi:DefCCOMPACT}), we obtain $S\in\sTriu{n}$ and
\begin{align}
\hat{A}
=
C(y)-\tfrac{1}{2}\Big(\textnormal{triu}\big(C(y)^T,1\big)\Big)^T+\tfrac{1}{2}\textnormal{triu}\big(C(y)^T,1\big)
\label{coi:StartinPoint:Proposition:Matrices:Proof1}
\end{align}
due to (\ref{coi:StartinPoint:Proposition:Matrices:AhatCOMPACT}).
Now, (\ref{coi:StartinPoint:Proposition:Matrices:Proof1}) implies (\ref{coi:StartinPoint:Proposition:Matrices:AhatAlternativeRepresentationCOMPACT}).
We calculate
\begin{equation}
C(y)^T-\tfrac{1}{2}\textnormal{triu}\big(C(y)^T,1\big)
=
\textnormal{diag}\big(C(y)\big)
+\tfrac{1}{2}\textnormal{triu}\big(C(y)^T,1\big)
\punkt
\label{coi:StartinPoint:Proposition:Matrices:Proof2}
\end{equation}
Because of
$
\tfrac{1}{2}
\big(
\textnormal{triu}\big(C(y)^T,1\big)
\big)^T
=
\textnormal{tril}\big(C(y),-1\big)
-\tfrac{1}{2}
\big(
\textnormal{triu}\big(C(y)^T,1\big)
\big)^T
$
we have
$
\textnormal{diag}\big(C(y)\big)
+\tfrac{1}{2}
\big(
\textnormal{triu}\big(C(y)^T,1\big)
\big)^T
=
C(y)
-\tfrac{1}{2}
\big(
\textnormal{triu}\big(C(y)^T,1\big)
\big)^T
$.
Therefore,
combining 
(\ref{coi:StartinPoint:Proposition:Matrices:Proof1}) and
(\ref{coi:StartinPoint:Proposition:Matrices:Proof2})
yields
$
\hat{A}^T
=
\hat{A}
$,
i.e.~$\hat{A}\in\Sym{n}$.
Consequently
there exists a modified Cholesky factorization of $\hat{A}$ of the form (\ref{coi:StartinPoint:Proposition:Matrices:AhatChol}). Hence, we can choose $R$ according to (\ref{coi:StartinPoint:Proposition:Matrices:R}) and evaluating $A$ at $(y,R,S)$ with $R$ from (\ref{coi:StartinPoint:Proposition:Matrices:R}) and $S$ from (\ref{coi:StartinPoint:Proposition:Matrices:AhatCOMPACT}) yields $A(y,R,S)=\hat{R}^T\hat{R}$ due to
(\ref{coi:DefCCOMPACT}),
(\ref{coi:StartinPoint:Proposition:Matrices:R}) and
(\ref{coi:StartinPoint:Proposition:Matrices:AhatChol}).
\qedhere
\end{proof}
\begin{remark}
If $\hat{A}$ is positive semidefinite, then $D=\zeroMatrix{n}$ due to (\ref{coi:StartinPoint:Proposition:Matrices:AhatChol}).
If $C(y)$ is a diagonal matrix, then $S=\zeroMatrix{n}$ due to (\ref{coi:StartinPoint:Proposition:Matrices:AhatCOMPACT}).
Due to (\ref{coi:StartinPoint:Proposition:Matrices:AhatAlternativeRepresentationCOMPACT}), we can construct $\hat{A}$ by setting $\hat{A}$ equal to $C(y)$, then multiplying 
the lower triangular part of $\hat{A}$
by $\tfrac{1}{2}$ and finally copying the resulting lower triangular part of $\hat{A}$ to the upper triangular part of $\hat{A}$.
\end{remark}
Now we combine the facts that we presented in this subsection
to the Algorithm \ref{coi:Alg:Startpunkt},
which we will use for creating a starting point for the optimization problem (\ref{coi3:Optimierungsproblem})
\cbstartRED
with quadratic $F$\cbendRED.
\begin{algorithm}\label{coi:Alg:Startpunkt}~\\
\texttt{if the solver can only handle strictly feasible bound/linear constraints}\\
\hphantom{aaa}\texttt{Choose} $0<t_0<t_1<1$ (e.g., $t_0=0.1$ and $t_1=0.9$)\\
\hphantom{aaa}$u=(1-t_0)\underline{x}+t_0(\overline{x}-r)$\\
\hphantom{aaa}$v=(1-t_1)(\underline{x}+r)+t_1\overline{x}$\\
\texttt{else}\\
\hphantom{aaa}$u=\underline{x}$\\
\hphantom{aaa}$v=\overline{x}$\\
\texttt{end}\\
$z=\tfrac{1}{2}(u+v)$\\
$F=F(z)$\\
\texttt{if }$F\in\boldsymbol{F}$\\
\hphantom{aaa}\texttt{stop (found feasible point => cannot verify infeasibility)}\\
\texttt{end if}\\
\texttt{for }$k=1:m$\\
\hphantom{aaa}\texttt{if }$\underline{F}_k=-\infty$\\
\hphantom{aaaaaa}\texttt{if }$\overline{F}_k<F_k$\\
\hphantom{aaaaaaaaa}$y_k=-1$\\
\hphantom{aaaaaa}\texttt{else}\\
\hphantom{aaaaaaaaa}$y_k=0$\\
\hphantom{aaaaaa}\texttt{end if}\\
\hphantom{aaa}\texttt{else if }$\overline{F}_k=\infty$\\
\hphantom{aaaaaa}\texttt{if }$F_k<\underline{F}(k)$\\
\hphantom{aaaaaaaaa}$y_k=1$\\
\hphantom{aaaaaa}\texttt{else}\\
\hphantom{aaaaaaaaa}$y_k=0$\\
\hphantom{aaaaaa}\texttt{end if}\\
\hphantom{aaa}\texttt{else}\\
\hphantom{aaaaaa}\texttt{if }$F_k<\underline{F}_k$\\
\hphantom{aaaaaaaaa}$y_k=1$\\
\hphantom{aaaaaa}\texttt{else if }$\overline{F}_k<F_k$\\
\hphantom{aaaaaaaaa}$y_k=-1$\\
\hphantom{aaaaaa}\texttt{else}\\
\hphantom{aaaaaaaaa}$y_k=0$\\
\hphantom{aaaaaa}\texttt{end if}\\
\hphantom{aaa}\texttt{end if}\\
\texttt{end for}\\
\texttt{Compute }C(y)\\
$S=-\tfrac{1}{2}$\texttt{triu}$\big(C(y)^T,1\big)$\\  
$[\hat{R},D]=$\texttt{modified\_cholesky\_factorization}$\big(C(y)+S^T-S\big)$\\  
$R=$\texttt{sqrt}$(D)$
\end{algorithm}
\begin{remark}
Infeasible constrained solvers (e.g., SolvOpt by \citet{SolvOptPublication}) can be applied directly to the nonlinearly constrained optimization problems (\ref{coi3:OptimierungsproblemNB}). In this case the starting point created by Algorithm \ref{coi:Alg:Startpunkt} can be used at once without solving optimization problem (\ref{coi3:Optimierungsproblem}) first as it is necessary for \HauptAlgorithmus by \citet[\GenaueAngabeSix]{HannesPaperA}. Therefore, the bound constraints $u\leq\hat{u}$, $\hat{v}\leq v$ of optimization problem (\ref{coi3:OptimierungsproblemNB}) do not occur in this situation.
Nevertheless, it is useful in this case to add the linear constraint $u+r\leq v$ (with a fixed $r>\zeroVector{\NdimCer}$) from optimization problem (\ref{coi3:Optimierungsproblem}) to the constrained problem for preventing the box $[u,v]$ from becoming too small.
\end{remark}

\section{Numerical results}
In the following section we compare the numerical results of \HauptAlgorithmus by \citet[\GenaueAngabeANone]{HannesPaperA}, MPBNGC by \citet{MPBNGC} and SolvOpt by \citet{SolvOptPublication}
for some
examples that arise in the context of finding exclusion boxes for a quadratic CSP in GloptLab by \citet{Domes}.
\subsection{Introduction}
We will make tests for
\begin{itemize}
\item (the reduced version of) \HauptAlgorithmus for nonsmooth, nonconvex optimization problems with inequality constraints by \citet[\GenaueAngabeANone]{HannesPaperA} (with optimality tolerance $\varepsilon:=10^{-5}$
and with MOSEK by \citet{AndersenPaper} as QCQP-solver for determining the search direction), where we refer to the linearly constrained version as ``BNLC''
and to the nonlinearly constrained version as
``Red(uced) Alg(orithm)''.
It is an extension of the bundle-Newton method for nonsmooth, nonconvex unconstrained minimization by \citet{Luksan,LuksanPBUNandBNEW} to the nonlinearly constrained problems.
\item MPBNGC by \citet{MPBNGC} (with standard termination criterions; 
since MPBNGC turned out to be very fast with respect to pure solving time for the low dimensional examples in the case of successful termination with a stationary point, the number of iterations and function evaluations was chosen in a way that in the other case the solving times of the different algorithms have approximately at least the same magnitude)
\item SolvOpt by \citet{SolvOptPublication} (with the standard termination criterions, which are described in \citet{SolvOpt})
\end{itemize}
(we choose MPBNGC and SolvOpt for our comparisons, since both are written in a compiled programming language, both are publicly available, and both support nonconvex constraints) on the following examples:
\begin{itemize}
\item We give results for the linearly constrained optimization problem (\ref{coi3:Optimierungsproblem}) with a fixed box (i.e.~without optimizing $u$ and $v$) for dimensions between $4$ and $11$ in
Subsection
\ref{section:ExclusionBoxTests:LinearlyConstrainedCaseFixedInterval}.
\item We give results for the linearly constrained optimization problem (\ref{coi3:Optimierungsproblem}) with a variable box (i.e.~with optimizing $u$ and $v$) for dimensions between $8$ and $21$ in
Subsection
\ref{section:ExclusionBoxTests:LinearlyConstrainedCaseVariableInterval}.
\item We give results for the nonlinearly constrained optimization problem (\ref{coi3:OptimierungsproblemNB}) for dimension $8$ in
Subsection
\ref{section:ExclusionBoxTests:NonLinearlyConstrainedCase}, where we use
$
b_+^1(y,z,R,S,u,v)
:=
\lvert
\left(
\begin{smallmatrix}
u-\underline{x}\\
v-\overline{x}
\end{smallmatrix}
\right)
\rvert_{_1}
$
as the objective function.
\end{itemize}
The underlying data for these nonsmooth optimization problems was extracted from real CSPs that occur in GloptLab by \citet{Domes}. Apart from $u$ and $v$, we will concentrate on the optimization of the variables $y$ and $z$ due to the large number of tested examples (cf.~Subsection \ref{subsection:PureSolvingTime}), and since the additional optimization of $R$ and $S$ did not have much impact on the quality of the results which was discovered in additional empirical observations, where a detailed analysis of these observations goes beyond the scope of this
paper.
Furthermore, we will make our tests for the two different choices of the function $T$ from
Example \ref{AN:Example:CertificateZeroSet},
which occurs in the denominator of the certificate $f$ from (\ref{coi3:Def:f}), where for the latter one $f$ is only defined outside of the zero set of $T$ which has measure zero.

The (extensive) tables corresponding to the results, which we will discuss in this section, can be found in \citet[\GenaueAngabeANtwo]{HannesPaperA}.

All test examples will be sorted with respect to the problem dimension (beginning with the smallest). Furthermore, we use analytic derivative information for all occurring functions (Note: Implementing analytic derivative information for the certificate from (\ref{coi3:Def:f}) effectively, is a nontrivial task) and we perform all tests on 
an Intel Pentium IV with 3 GHz and 1 GB RAM running Microsoft Windows XP.

We introduce the following notation for the record of the solution process of an algorithm.
\begin{notation}
We denote the number of performed iterations by Nit,
we
denote the final number of evaluations of function dependent data by
\begin{align*}
\textnormal{Na}
&:=
\textnormal{``Number of calls to }(f,g,G,F,\hat{g},\hat{G})\textnormal{'' (\textnormal{Red Alg})}
\\
\NforMPBNGC
&:=
\textnormal{``Number of calls to }(f,g,F,\hat{g})\textnormal{'' (MPBNGC)}
\\
\NforSolvOpt
&:=
\textnormal{``Number of calls to }(f,F)\textnormal{'' (SolvOpt)}
\\
\textnormal{Ng}
&:=
\textnormal{``Number of calls to }g\textnormal{'' (SolvOpt)}
\\
\textnormal{N}\hat{\textnormal{g}}
&:=
\textnormal{``Number of calls to }\hat{g}\textnormal{'' (SolvOpt)}
\komma
\end{align*}
and we denote the duration of the solution process by
\begin{align*}
\TotalTime
&:=
\textnormal{``Time in milliseconds''}
\\
\PartTime
&:=
\textnormal{``Time in milliseconds (without (QC)QP)''}\komma
\end{align*}
where $\PartTime$ is only relevant for \HauptAlgorithmus.
\end{notation}
\begin{remark}
In particular the percentage of the time spent in the (QC)QP in \HauptAlgorithmus is given by
\begin{equation}
p_1
:=
\tfrac
{
\TotalTime(\textnormal{Red Alg})
-
\PartTime(\textnormal{Red Alg})
}{
\TotalTime(\textnormal{Red Alg})
}
\punkt
\label{NumericalResults:Remark:p1}
\end{equation}
\end{remark}
For comparing the cost of evaluating function dependent data (like e.g., function values, subgradients,\dots) in a
preferably
fair way (especially for solvers that use different function dependent data), we
will
make use of the following realistic ``credit point system'' that an optimal implementation of algorithmic differentiation in backward mode suggests (cf.~\citet{Griewank} and
\citet{schichl2004global,schichlcoconut,SchichlHabilitation}).
\begin{definition}
Let $f_A$, $g_A$ and $G_A$ resp.~$F_A$, $\hat{g}_A$ and $\hat{G}_A$ be the number of function values, subgradients and (substitutes of) Hessians of the objective function resp.~the constraint that an algorithm $A$ used for solving a nonsmooth optimization problem which may have linear constraints and at most one single nonsmooth nonlinear constraint. Then we define the cost of these evaluations by
\begin{equation}
c(A):=f_A+3g_A+3N\cdot G_A+\textnormal{nlc}\cdot(F_A+3\hat{g}_A+3N\cdot\hat{G}_A)
\label{NumericalResults:Definition:cost}
\komma
\end{equation}
where
$\textnormal{nlc}=1$ if the optimization problem has a nonsmooth nonlinear constraint, and $\textnormal{nlc}=0$ otherwise.
\end{definition}
Since the \HauptAlgorithmus evaluates $f$, $g$, $G$ and $F$, $\hat{g}$, $\hat{G}$ at every call that computes function dependent data (cf.~\citet[\GenaueAngabeANthree]{HannesPaperA}), we obtain
\begin{equation*}
c(\textnormal{Red Alg})
=
(1+\textnormal{nlc})\cdot\textnormal{Na}\cdot(1+3+3N)
\punkt
\end{equation*}
Since MPBNGC evaluates $f$, $g$ and $F$, $\hat{g}$ at every call that computes function dependent data (cf.~\citet{MPBNGC}), the only difference to \HauptAlgorithmus with respect to $c$ from (\ref{NumericalResults:Definition:cost}) is that MPBNGC uses no information of Hessians and hence we obtain
\begin{equation*}
c(\textnormal{MPBNGC})
=
(1+\textnormal{nlc})\cdot\NforMPBNGC\cdot(1+3)
\punkt
\end{equation*}
Since SolvOpt evaluates $f$ and $F$ at every call that computes function dependent data and only sometimes $g$ or $\hat{g}$ (cf.~\citet{SolvOpt}), we obtain
\begin{equation*}
c(\textnormal{SolvOpt})
=
(1+\textnormal{nlc})\cdot\NforSolvOpt+3(\textnormal{Ng}+\textnormal{nlc}\cdot\textnormal{N}\hat{g})
\punkt
\end{equation*}

We will visualize the performance of two algorithms $A$ and $B$
in this section
by the following record-plot: In this plot the abscissa is labeled by the name of the test example and the value of the ordinate is given by
$
\rp(c):=c(B)-c(A)
$
(i.e.~if $\rp(c)>0$, then $\rp(c)$ tells us how much better algorithm $A$ is than algorithm $B$ with respect to $c$ for the considered example in absolute numbers; if $\rp(c)<0$, then $\rp(c)$ quantifies the advantage of algorithm $B$ in comparison to algorithm $A$; if $\rp(c)=0$, then both algorithms are equally good with respect to $c$). The scaling of the plots is chosen in a way that plots that contain the same test examples are comparable (although the plots may have been generated by results from different algorithms).
\subsection{Overview of the results}
\label{subsection:PureSolvingTime}
We compare the total time $t_1$ of the solution process
\begin{center}
\begin{tiny}
\begin{tabular}{l|rrrrr|}
                                  & \multicolumn{1}{c}{\TotalTime(Red Alg)}
                                  & \multicolumn{1}{c}{\PartTime(Red Alg)}
                                  & \multicolumn{1}{c}{$p_1$}
                                  & \multicolumn{1}{c}{\TotalTime(MPBNGC)}
                                  & \multicolumn{1}{c|}{\TotalTime(SolvOpt)}
\\
\hline
                                     & \multicolumn{5}{c|}{$T=1$}\\
Linearly constrained (fixed box)     &  1477 &  215 & 0.85 &   231 &  2754 \\
Linearly constrained (variable box)  &   782 &   60 & 0.92 &    30 &  1546 \\
Nonlinearly constrained              & 25420 & 4885 & 0.81 & 21860 &  38761\\
Nonlinearly constrained (*)          & 19053 & 3723 & 0.80 &  2067 &  30312\\
\hline
                                     & \multicolumn{5}{c|}{$T=\lvert y\rvert_{_2}$}\\
Linearly constrained (fixed box)     &  1316 &  129 & 0.90 &    15 &  1508\\
Linearly constrained (variable box)  &   797 &   45 & 0.94 &    30 &  2263\\
Nonlinearly constrained              & 24055 & 4284 & 0.82 & 25383 & 16909\\
Nonlinearly constrained (*)          & 18038 & 3112 & 0.83 &  3719 & 12635\\
\hline
\end{tabular}
\end{tiny}
\end{center}
where we make use of
(\ref{NumericalResults:Remark:p1}) and in (*) we consider only those examples for which MPBNGC satisfied one of its termination criterions
(cf.~Subsection \ref{section:ExclusionBoxTests:NonLinearlyConstrainedCase}).

For the linearly constrained problems MPBNGC was the fastest of the tested algorithms, followed by BNLC and SolvOpt.
If we consider only those nonlinearly constrained examples for which MPBNGC was able to terminate successfully, MPBNGC was the fastest algorithm again. Considering the competitors, for the nonlinearly constrained problems with $T=1$ the reduced algorithm is 13.3 seconds resp.~11.3 seconds faster than SolvOpt, while for the nonlinearly constrained problems with $T=\lvert y\rvert_{_2}$ SolvOpt is 7.1 seconds resp.~5.4 seconds faster than the reduced algorithm.

Taking
a closer look at $p_1$ yields the observation that at least 85\% of the time is consumed by solving the QP (in the linearly constrained case) resp.~at least 80\% of the time is consumed by solving the QCQP (in the nonlinearly constrained case), which implies that the difference in the percentage between the QP and the QCQP is small in particular (an investigation of the behavior of the solving time $t_1$ for higher dimensional problems can be found in
\citet[\GenaueAngabeHD]{HannesPaperA}).

Therefore, we will concentrate in
Subsection
\ref{section:ExclusionBoxTests:LinearlyConstrainedCaseFixedInterval},
Subsection
\ref{section:ExclusionBoxTests:LinearlyConstrainedCaseVariableInterval} and
Subsection
\ref{section:ExclusionBoxTests:NonLinearlyConstrainedCase}
on the comparison of qualitative aspects between \HauptAlgorithmusKomma MPBNGC and SolvOpt (like, e.g., the cost $c$ of the evaluations),
where before making these detailed comparisons, we give a short overview of them as a reason of clarity of the presentation: In both cases
$T=1$ (solid line) and
$T=\lvert y\rvert_{_2}$ (dashed line),
where we use the two different line types for a better distinction in the following, we tested
128 linearly constrained examples with a fixed box,
117 linearly constrained examples with a variable box and
201 nonlinearly constrained examples,
which yields the following two summary tables consisting of the number of examples for which
\HauptAlgorithmus(BNLC resp.~the reduced algorithm) is better than MPBNGC resp.~SolvOpt (and vice versa) with respect to the cost $c$ of the evaluations
\begin{center}
\begin{tiny}
\begin{tabular}{l|rrrrrrrr|}
(Color code: Light grey)
& \multicolumn{4}{c|}{MPBNGC} & & \multicolumn{3}{|c|}{BNLC/Red Alg}\\
                                 &
\multicolumn{1}{c}{no termi-}     & 
\multicolumn{1}{c}{significantly} & \multicolumn{1}{c}{better} & \multicolumn{1}{c}{a bit} &
\multicolumn{1}{|c|}{nearly} &
\multicolumn{1}{c}{a bit} & \multicolumn{1}{c}{better} & \multicolumn{1}{c|}{significantly}
\\
                                 &
\multicolumn{1}{c}{nation}     & 
\multicolumn{1}{c}{better} & \multicolumn{1}{c}{} & \multicolumn{1}{c}{better} &
\multicolumn{1}{|c|}{equal} &
\multicolumn{1}{c}{better} & \multicolumn{1}{c}{} & \multicolumn{1}{c|}{better}
\\
\hline
                                     &    &  &  &  & \multicolumn{1}{c}{$T=1$} & & &\\
Linearly constrained (fixed box)     &  0 & 2 & 5  & 12 & 106 &  2 & 0 & 1\\
Linearly constrained (variable box)  &  0 & 0 & 0  &  1 & 116 &  0 & 0 & 0\\
Nonlinearly constrained              & 32 & 6 & 28 & 89 &  31 & 10 & 2 & 3\\
\hline
                                     &  &  &  &  & \multicolumn{1}{c}{$T=\lvert y\rvert_{_2}$} & & &\\
Linearly constrained (fixed box)     &  0 & 2 & 5 &  30 &  91 &  0 &  0 & 0\\
Linearly constrained (variable box)  &  0 & 0 & 0 &   5 & 112 &  0 &  0 & 0\\
Nonlinearly constrained              & 43 & 4 & 28 & 59 &  30 & 15 & 14 & 8\\
\hline
\end{tabular}
\end{tiny}
\end{center}
\begin{center}
\begin{tiny}
\begin{tabular}{l|rrrrrrrr|}
(Color code: Black)
& \multicolumn{4}{c|}{SolvOpt} & & \multicolumn{3}{|c|}{BNLC/Red Alg}\\
                                 &
\multicolumn{1}{c}{no termi-}     & 
\multicolumn{1}{c}{significantly} & \multicolumn{1}{c}{better} & \multicolumn{1}{c}{a bit} &
\multicolumn{1}{|c|}{nearly} &
\multicolumn{1}{c}{a bit} & \multicolumn{1}{c}{better} & \multicolumn{1}{c|}{significantly}
\\
                                 &
\multicolumn{1}{c}{nation}     & 
\multicolumn{1}{c}{better} & \multicolumn{1}{c}{} & \multicolumn{1}{c}{better} &
\multicolumn{1}{|c|}{equal} &
\multicolumn{1}{c}{better} & \multicolumn{1}{c}{} & \multicolumn{1}{c|}{better}
\\
\hline
                                     &  &  &  &  & \multicolumn{1}{c}{$T=1$} & & &\\
Linearly constrained (fixed box)     & 0 & 1 &  3 &  0 & 61 & 25 & 13 & 25\\
Linearly constrained (variable box)  & 0 & 0 &  0 &  0 & 48 & 37 & 24 & 8\\
Nonlinearly constrained              & 0 & 0 & 14 & 20 & 21 & 76 & 20 & 50\\
\hline
                                     &  &  &  &  & \multicolumn{1}{c}{$T=\lvert y\rvert_{_2}$} & & &\\
Linearly constrained (fixed box)     & 0 & 1 &  2 &  1 & 32 & 34 & 49 &  9\\
Linearly constrained (variable box)  & 0 & 0 &  0 &  5 & 41 & 32 & 19 & 20\\
Nonlinearly constrained              & 0 & 2 & 24 & 26 & 31 & 61 & 45 & 12\\
\hline
\end{tabular}
\end{tiny}
\end{center}
that are visualized in
Figures \ref{Figure:OverviewTableLinearlyConstrainedFixedBox},
\ref{Figure:OverviewTableLinearlyConstrainedVariableBox}, and
\ref{Figure:OverviewTableNonlinearlyConstrained}
\\
\begin{center}
\captionsetup{type=figure}
\includegraphics[width=8cm]{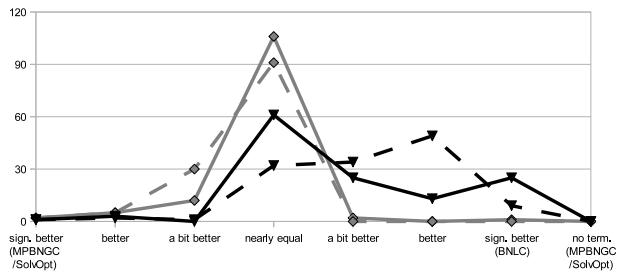}
\captionof{figure}{Linearly constrained with fixed box (summary)}
\label{Figure:OverviewTableLinearlyConstrainedFixedBox}
\end{center}
\begin{center}
\captionsetup{type=figure}
\includegraphics[width=8cm]{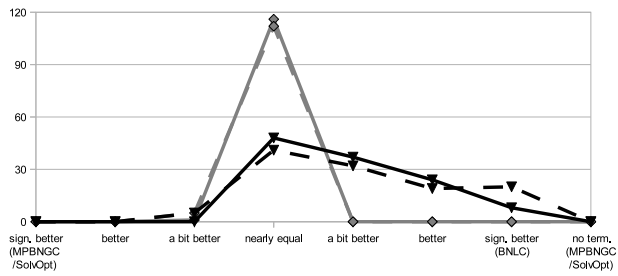}
\captionof{figure}{Linearly constrained with variable box (summary)}
\label{Figure:OverviewTableLinearlyConstrainedVariableBox}
\end{center}
\begin{center}
\captionsetup{type=figure}
\includegraphics[width=8cm]{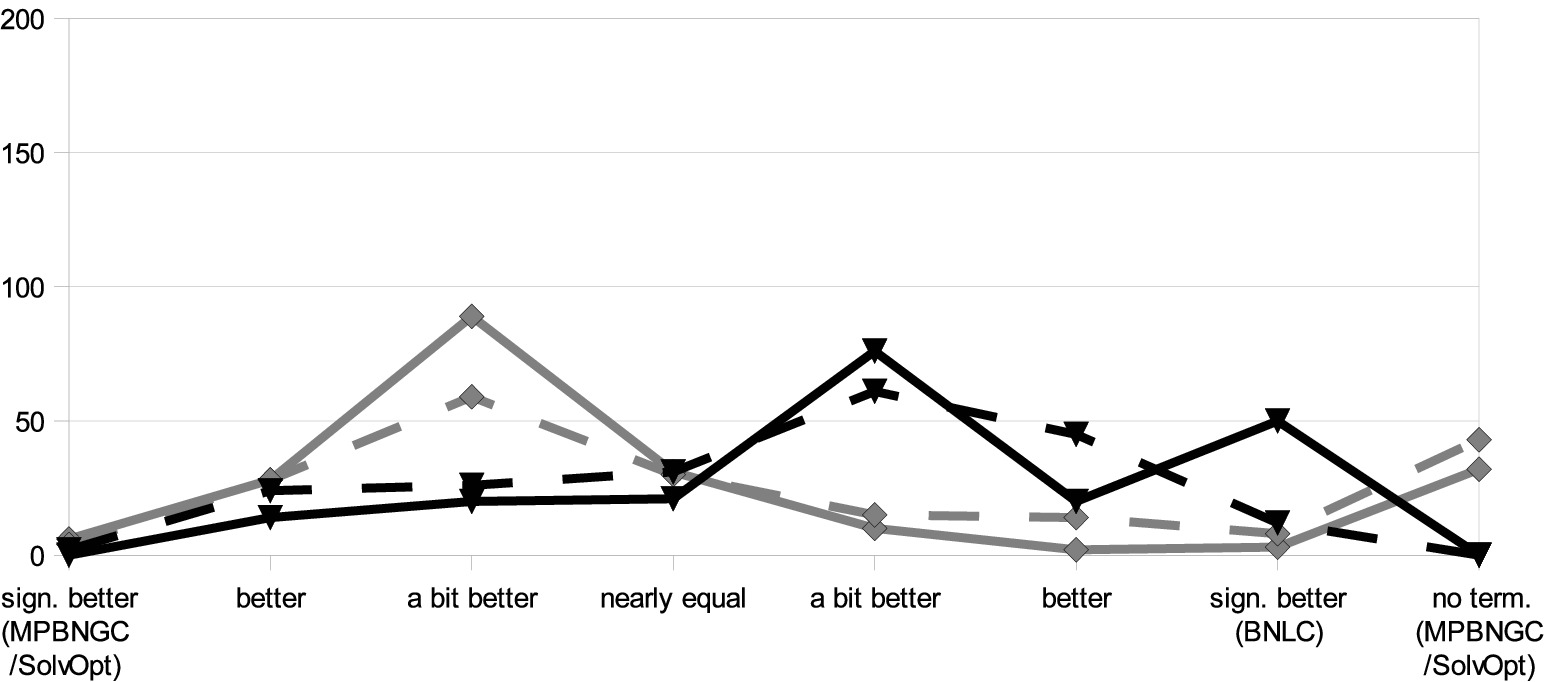}
\captionof{figure}{Nonlinearly constrained (summary)}
\label{Figure:OverviewTableNonlinearlyConstrained}
\end{center}
and that let us draw the following conclusions:

The performance differences between BNLC and MPBNGC can be neglected for the largest part of the linearly constrained examples (with small advantages for MPBNGC in about ten percent of these examples).
For the nonlinearly constrained examples the reduced algorithm is superior to MPBNGC in one quarter of the examples, for forty percent of the examples one of these two solvers has small advantages over the other (in most cases MPBNGC is the slightly more successful one), the performance differences between the two algorithms considered can be completely neglected for fifteen percent of the examples, and for further fifteen percent of the examples MPBNGC beats the reduced algorithm clearly.

For the linearly constrained examples BNLC is superior to SolvOpt in one third of the examples, for one quarter of the examples one of these two solvers has small advantages over the other (in nearly all cases BNLC is the slightly more successful one), the performance differences between the two algorithms considered can be completely neglected for forty percent of the examples, and in only one percent of the examples SolvOpt beats the reduced algorithm clearly.
For the nonlinearly constrained examples the reduced algorithm is superior to SolvOpt in one third of the examples, for 45 percent of the examples one of these two solvers has small advantages over the other (the reduced algorithm is often the slightly more successful one), the performance differences between the considered two algorithms can be completely neglected for ten percent of the examples, and in the remaining ten percent of the examples SolvOpt beats the reduced algorithm clearly.

In contrast to the linearly constrained case, in which all three solvers terminated successfully for all examples, only the reduced algorithm and SolvOpt were able to attain this goal in the nonlinearly constrained case, too.

\subsection{Linearly constrained case (fixed box)}
\label{section:ExclusionBoxTests:LinearlyConstrainedCaseFixedInterval}
We took 310 examples from real CSPs that occur in GloptLab. We observe that
for 79 examples
the starting point is feasible for the CSP and
for 103 examples
the evaluation of the certificate at the starting point identifies the box as infeasible
and hence there remain 128 test problems.

\paragraph{BNLC vs. MPBNGC}
In the case $T=1$ we conclude from Figure \ref{NumericalResults:Figure:LCfixed-RP-c-T1_AddonMPBNGC} that BNLC is
significantly better in 1 example
and
a bit better in 2 examples
in comparison with MPBNGC, while MPBNGC is
significantly better in 2 examples,
better in 5 examples
and
a bit better in 12 examples
in comparison with BNLC. In the 106 remaining examples the costs of BNLC and MPBNGC are practically the same.

In the case $T=\lvert y\rvert_{_2}$ it follows from Figure \ref{NumericalResults:Figure:LCfixed-RP-c-TNormy_AddonMPBNGC} that MPBNGC is
significantly better in 2 examples,
better in 5 examples
and
a bit better in 30 examples
in comparison with BNLC. In the 91 remaining examples the costs of BNLC and MPBNGC are practically the same.

\paragraph{BNLC vs. SolvOpt}
In the case $T=1$ we conclude from Figure \ref{NumericalResults:Figure:LCfixed-RP-c-T1} that BNLC is
significantly better in 25 examples,
better in 13 examples
and
a bit better in 25 examples
in comparison with SolvOpt, while SolvOpt is
significantly better in 1 example
and
better in 3 examples
in comparison with BNLC. In the 61 remaining examples the costs of BNLC and SolvOpt are practically the same.

In the case $T=\lvert y\rvert_{_2}$ it follows from Figure \ref{NumericalResults:Figure:LCfixed-RP-c-TNormy} that BNLC is
significantly better in 9 examples,
better in 49 examples
and
a bit better in 34 examples
in comparison with SolvOpt, while SolvOpt is
significantly better in 1 example,
better in 2 examples
and
a bit better in 1 example
in comparison with BNLC. In the 32 remaining examples the costs of BNLC and SolvOpt are practically the same.

\subsection{Linearly constrained case (variable box)}
\label{section:ExclusionBoxTests:LinearlyConstrainedCaseVariableInterval}
We observe that
for 80 examples
the starting point is feasible for the CSP and
for 113 examples
the evaluation of the certificate at the starting point identifies the boxes as infeasible
and hence there remain 117 test problems of the 310 original examples from GloptLab.

\paragraph{BNLC vs. MPBNGC}
In the case $T=1$ we conclude from Figure \ref{NumericalResults:Figure:LCvariable_RP_c_T1_AddonMPBNGC} that MPBNGC is
a bit better in 1 example
in comparison with BNLC. In the 116 remaining examples the costs of BNLC and MPBNGC are practically the same.

In the case $T=\lvert y\rvert_{_2}$ it follows from Figure \ref{NumericalResults:Figure:LCvariable-RP-c-TNormy_AddonMPBNGC} that MPBNGC is
a bit better in 5 examples
in comparison with BNLC. In the 112 remaining examples the costs of BNLC and MPBNGC are practically the same.

\paragraph{BNLC vs. SolvOpt}
In the case $T=1$ we conclude from Figure \ref{NumericalResults:Figure:LCvariable_RP_c_T1} that BNLC is
significantly better in 8 examples,
better in 24 examples
and
a bit better in 37 examples
in comparison with SolvOpt. In the 48 remaining examples the costs of BNLC and SolvOpt are practically the same.

In the case $T=\lvert y\rvert_{_2}$ it follows from Figure \ref{NumericalResults:Figure:LCvariable-RP-c-TNormy} that BNLC is
significantly better in 20 examples,
better in 19 examples
and
a bit better in 32 examples
in comparison with SolvOpt, while SolvOpt is
a bit better in 5 examples (21, 101, 102, 128, 189)
in comparison with BNLC. In the 41 remaining examples the costs of BNLC and SolvOpt are practically the same.

\subsection{Nonlinearly constrained case}
\label{section:ExclusionBoxTests:NonLinearlyConstrainedCase}
Since we were not able to find a starting point, i.e.~an infeasible sub-box, for 109 examples,
we exclude them from the following tests for which there remain 201 examples of the 310 original examples from GloptLab.

\paragraph{Reduced algorithm vs. MPBNGC}
In the case $T=1$ MPBNGC does not satisfy any of its termination criterions for 32 examples
within the given number of iterations and function evaluations.
For the remaining
169 examples we conclude from Figure \ref{NumericalResults:Figure:NLC_RP_c_T1_AddonMPBNGC} that the reduced algorithm is
significantly better in 3 examples,
better in 2 examples
and
a bit better in 10 examples
in comparison with MPBNGC, while MPBNGC is
significantly better in 6 examples,
better in 28 examples
and
a bit better in 89 examples
in comparison with the reduced algorithm, and in
31
examples the costs of the reduced algorithm and MPBNGC are practically the same.

In the case $T=\lvert y\rvert_{_2}$ MPBNGC does not satisfy any of its termination criterions for 43 examples
within the given number of iterations and function evaluations. For the remaining
158
examples it follows from Figure \ref{NumericalResults:Figure:NLC_RP_c_TNormy_AddonMPBNGC} that the reduced algorithm is
significantly better in 8 examples,
better in 14 examples
and
a bit better in 15 examples
in comparison with MPBNGC, while MPBNGC is
significantly better in 4 examples,
better in 28 examples
and
a bit better in 59 examples
in comparison with the reduced algorithm, and in
30
examples the costs of the reduced algorithm and MPBNGC are practically the same.

\paragraph{Reduced algorithm vs. SolvOpt}
In the case $T=1$ we conclude from Figure \ref{NumericalResults:Figure:NLC_RP_c_T1}  that the reduced algorithm is
significantly better in 50 examples,
better in 20 examples
and
a bit better in 76 examples
in comparison with SolvOpt, while SolvOpt is
better in 14 examples
and
a bit better in 20 examples
in comparison with the reduced algorithm. In the 21 remaining examples the costs of the reduced algorithm and SolvOpt are practically the same.

In the case $T=\lvert y\rvert_{_2}$ it follows from Figure \ref{NumericalResults:Figure:NLC_RP_c_TNormy} that the reduced algorithm is
significantly better in 12 examples,
better in 45 examples
and
a bit better in 61 examples
in comparison with SolvOpt, while SolvOpt is
significantly better in 2 examples,
better in 24 examples
and
a bit better in 26 examples
in comparison with the reduced algorithm. In the 31 remaining examples the costs of the reduced algorithm and SolvOpt are practically the same.

\section{Conclusion}
In this paper we presented a nonsmooth function that can be used as a certificate of infeasibility that allows the identification of exclusion boxes during the solution process of a CSP by techniques from nonsmooth optimization: While we can find an exclusion box by solving a linearly constrained nonsmooth optimization problem, the enlargement of an exclusion box can be achieved by solving a nonlinearly constrained nonsmooth optimization problem. Furthermore, we discussed important properties of the certificate as the reduction of scalability and we suggested a method to obtain a good starting point for the nonsmooth optimization problems.





\ifthenelse{\boolean{AppendixOutsourcing}}
{
\newpage
}

\begin{appendix}
\ifthenelse{\boolean{AppendixOutsourcing}}
{
\notocsection{Figures}
}
{
\section{Figures}
}
\begin{center}
\captionsetup{type=figure}
\includegraphics[width=14cm]{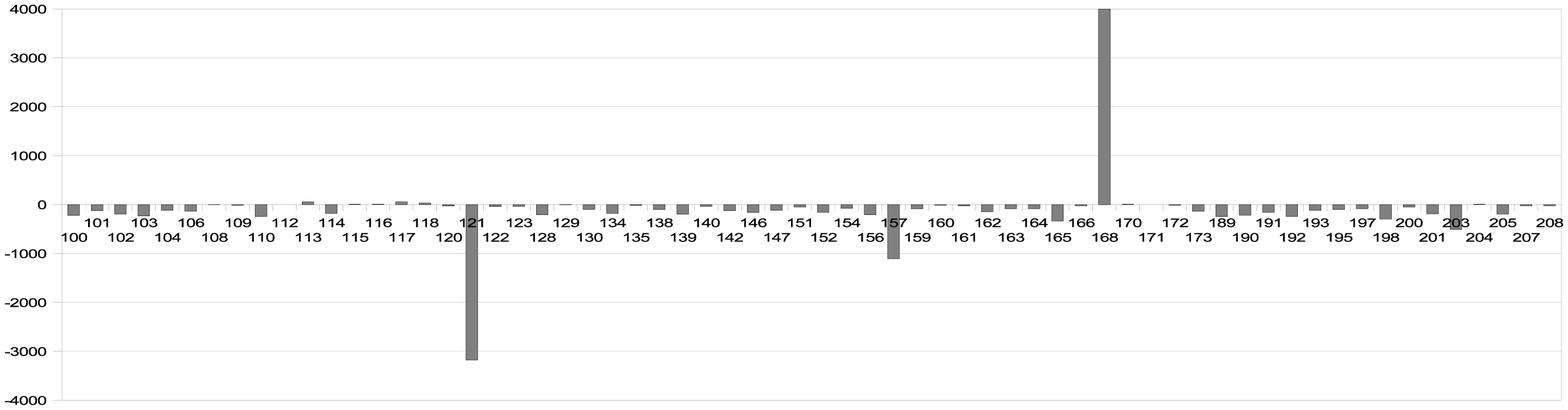}
\includegraphics[width=14cm]{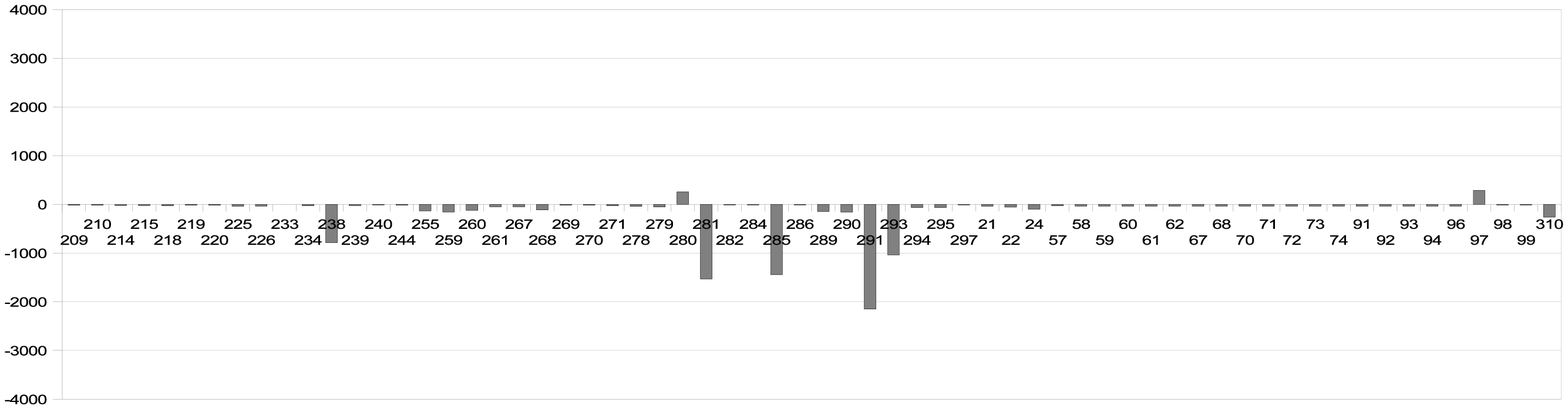}
\captionof{figure}{Linearly constrained (fixed box) --- \rp(c) for BNLC \& MPBNGC ($T=1$)}
\label{NumericalResults:Figure:LCfixed-RP-c-T1_AddonMPBNGC}
\end{center}
\begin{center}
\captionsetup{type=figure}
\includegraphics[width=14cm]{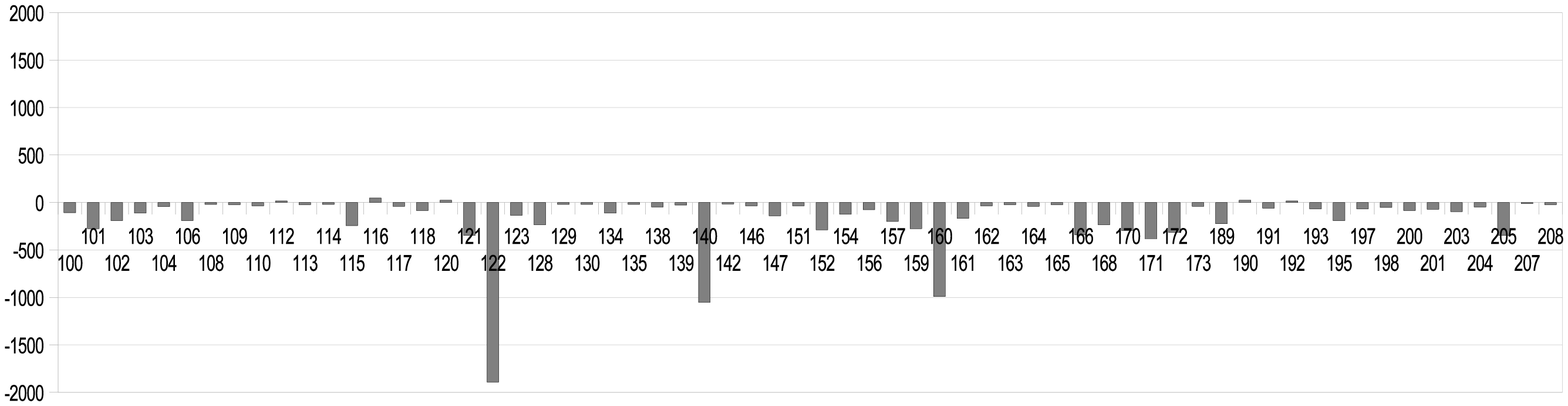}
\includegraphics[width=14cm]{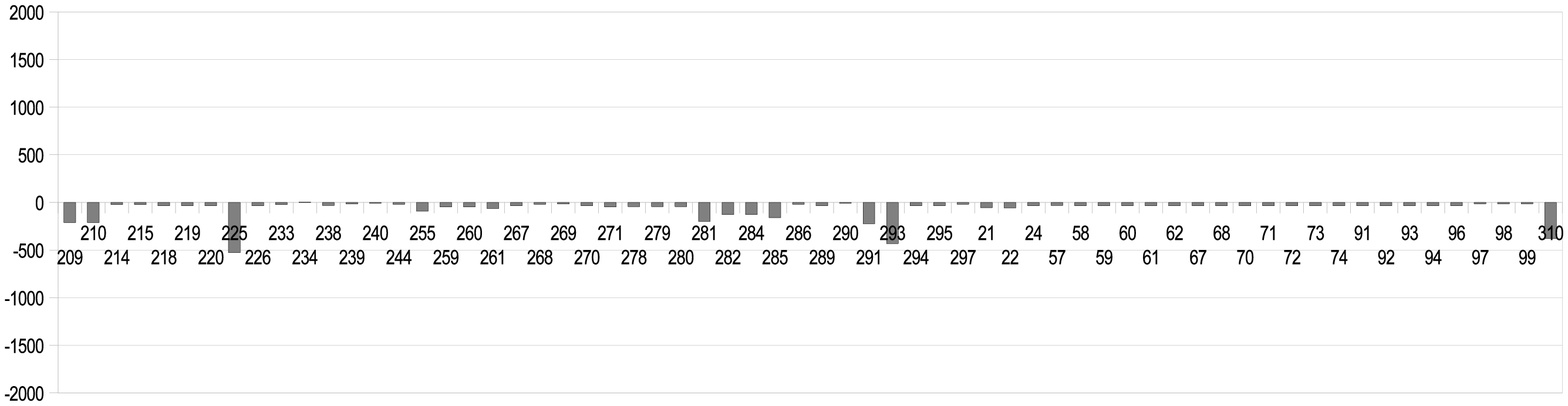}
\captionof{figure}{Linearly constrained (fixed box) --- \rp(c) for BNLC \& MPBNGC ($T=\lvert y\rvert_{_2}$)}
\label{NumericalResults:Figure:LCfixed-RP-c-TNormy_AddonMPBNGC}
\end{center}
\begin{center}
\captionsetup{type=figure}
\includegraphics[width=14cm]{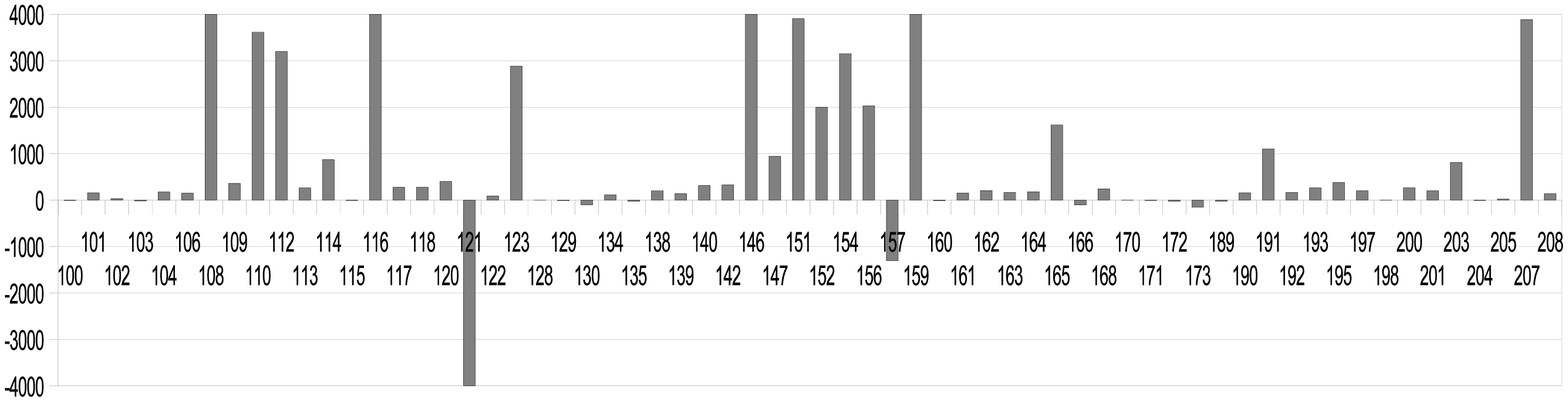}
\includegraphics[width=14cm]{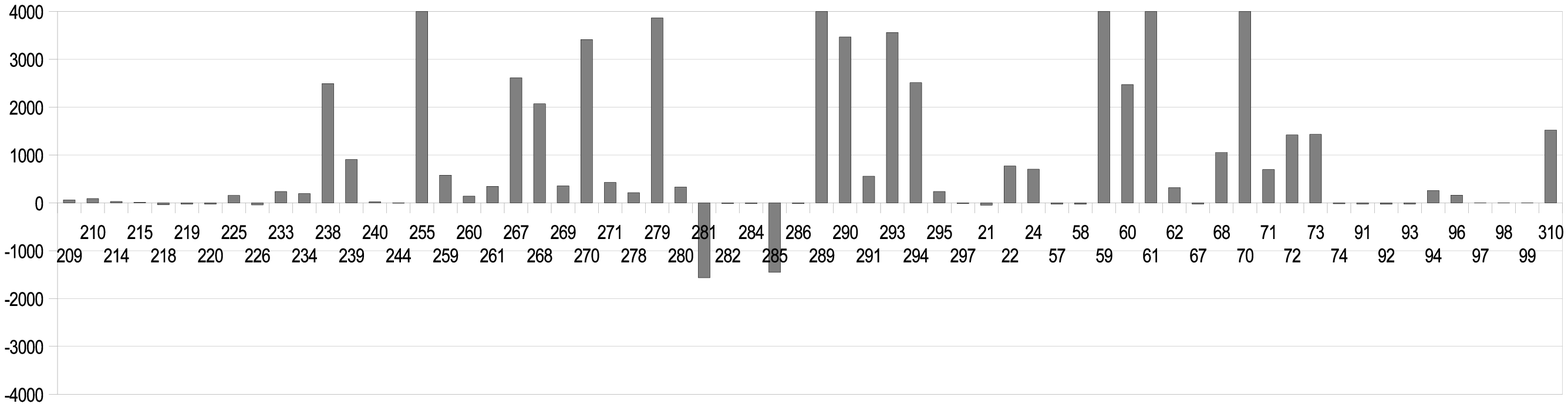}
\captionof{figure}{Linearly constrained (fixed box) --- \rp(c) for BNLC \& SolvOpt ($T=1$)}
\label{NumericalResults:Figure:LCfixed-RP-c-T1}
\end{center}
\begin{center}
\captionsetup{type=figure}
\includegraphics[width=14cm]{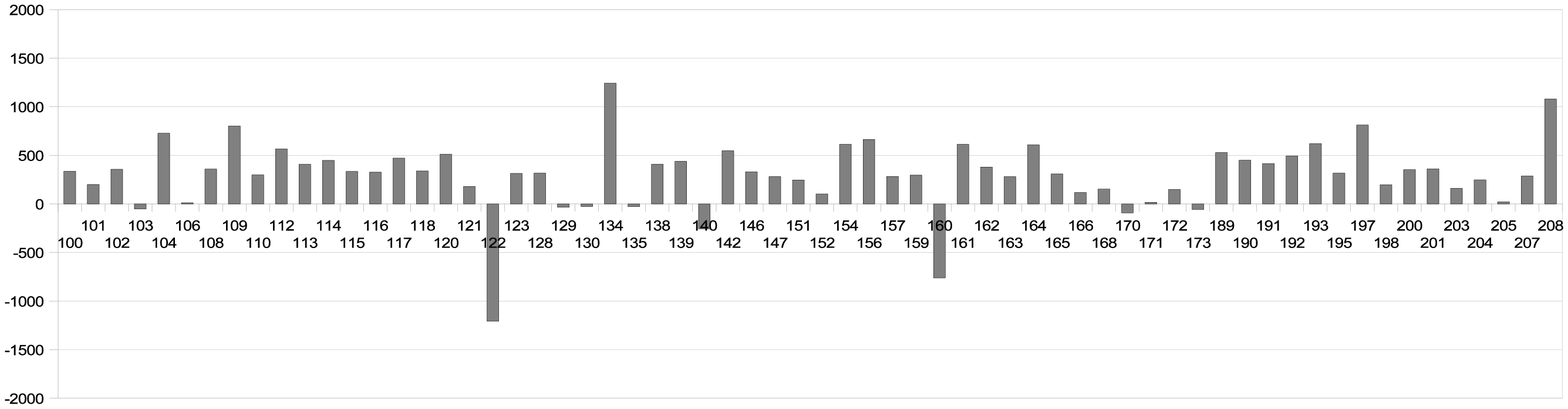}
\includegraphics[width=14cm]{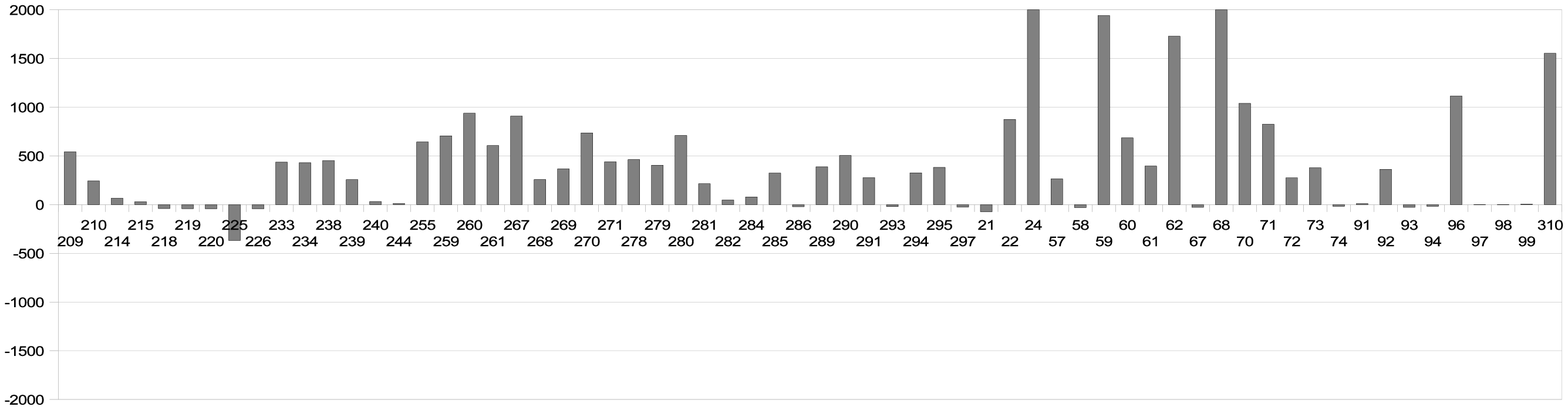}
\captionof{figure}{Linearly constrained (fixed box) --- \rp(c) for BNLC \& SolvOpt ($T=\lvert y\rvert_{_2}$)}
\label{NumericalResults:Figure:LCfixed-RP-c-TNormy}
\end{center}
\begin{center}
\captionsetup{type=figure}
\includegraphics[width=14cm]{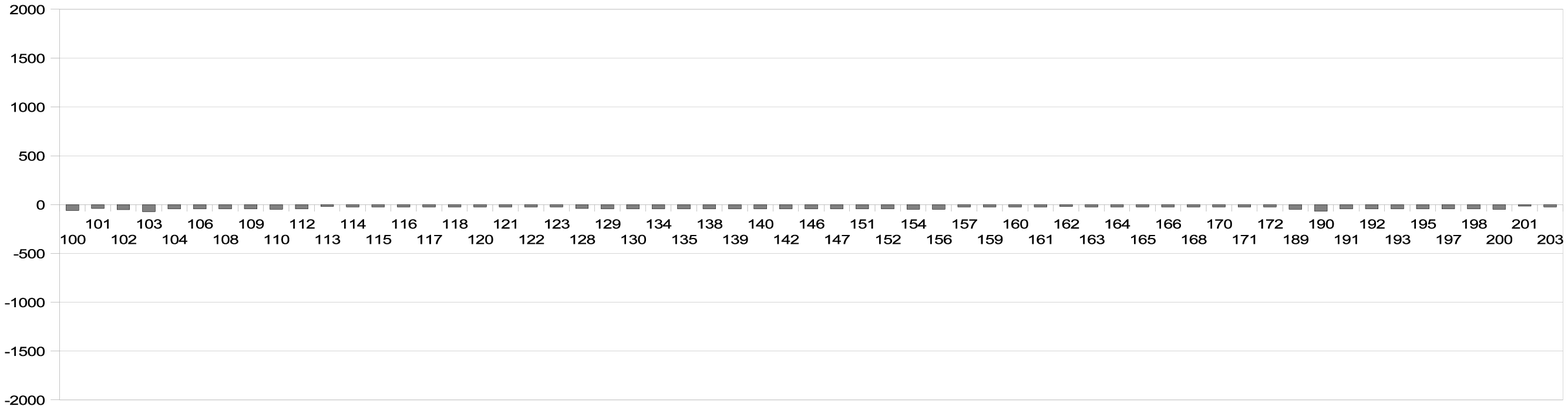}
\includegraphics[width=14cm]{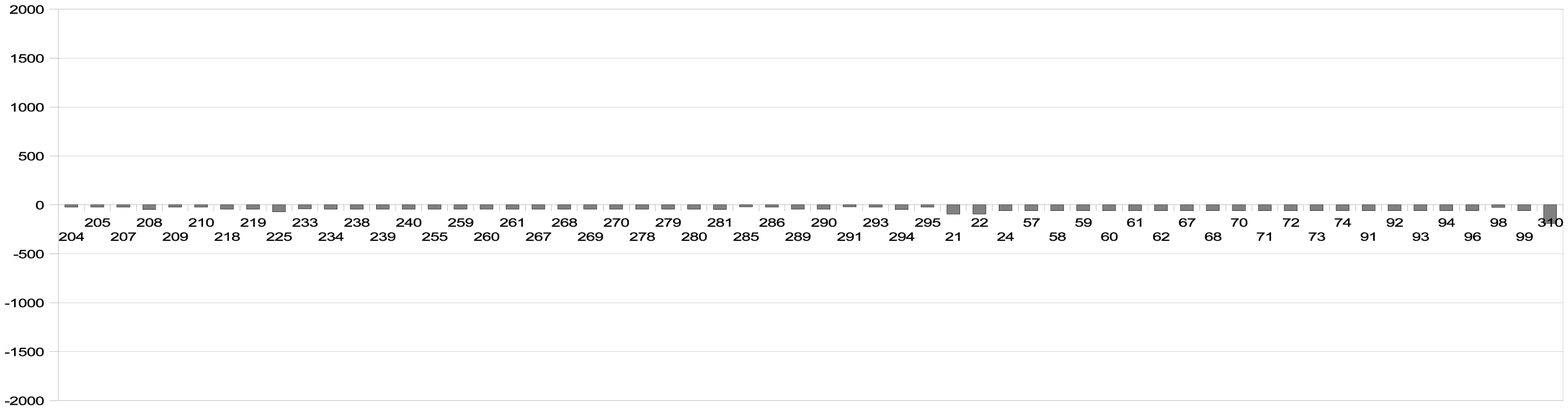}
\captionof{figure}{Linearly constrained (variable box) --- \rp(c) for BNLC \& MPBNGC ($T=1$)}
\label{NumericalResults:Figure:LCvariable_RP_c_T1_AddonMPBNGC}
\end{center}
\begin{center}
\captionsetup{type=figure}
\includegraphics[width=14cm]{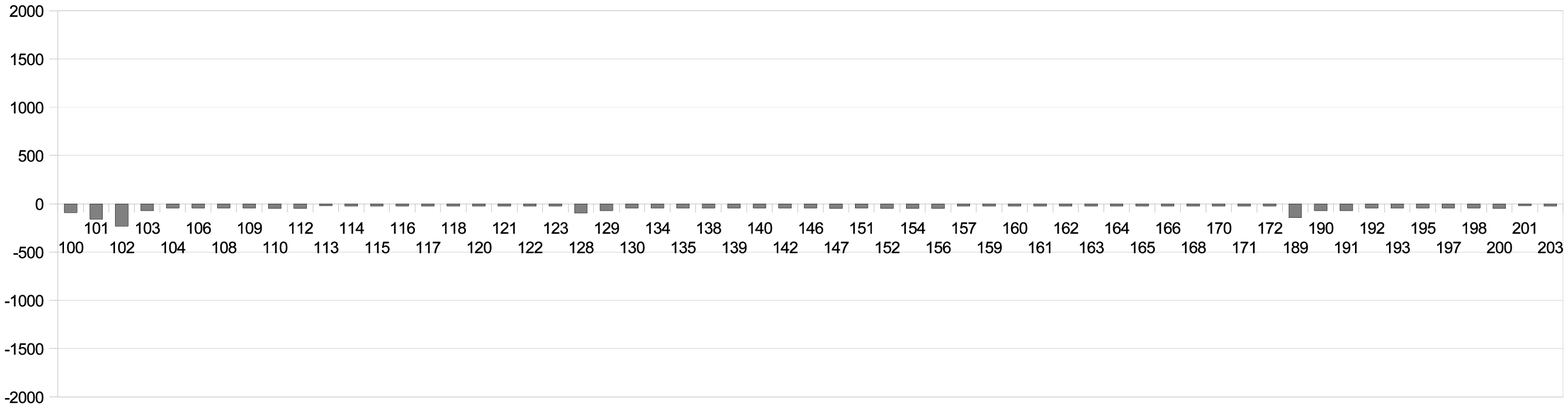}
\includegraphics[width=14cm]{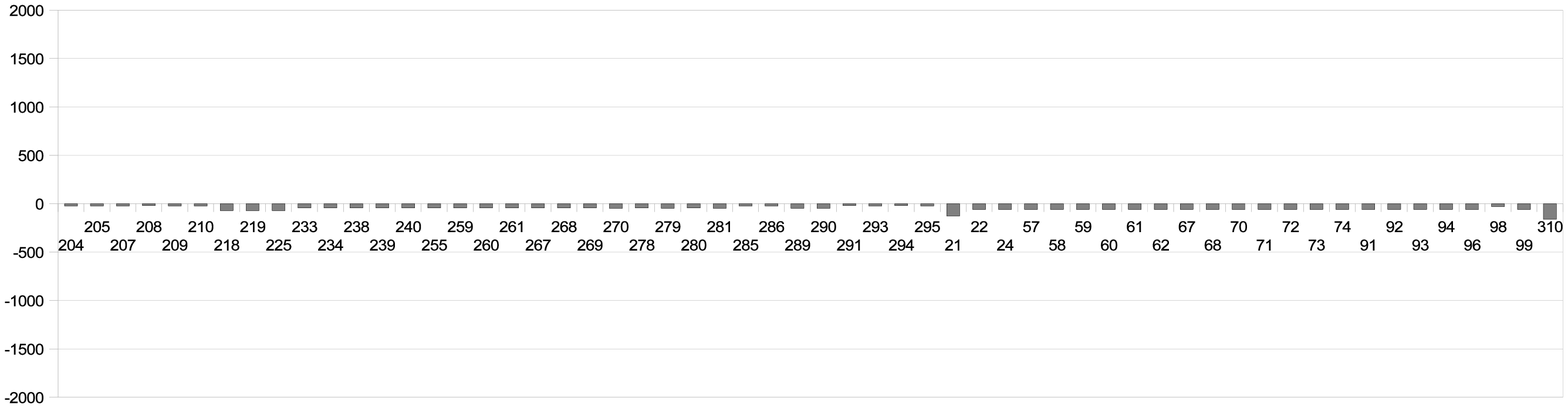}
\captionof{figure}{Linearly constrained (variable box) --- \rp(c) for BNLC \& MPBNC ($T=\lvert y\rvert_{_2}$)}
\label{NumericalResults:Figure:LCvariable-RP-c-TNormy_AddonMPBNGC}
\end{center}
\begin{center}
\captionsetup{type=figure}
\includegraphics[width=14cm]{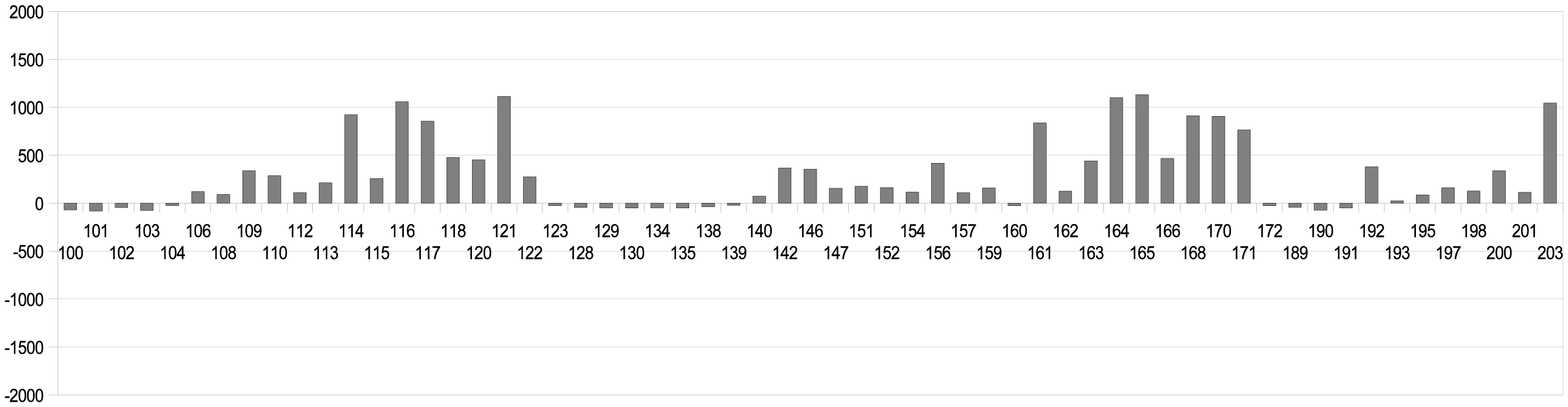}
\includegraphics[width=14cm]{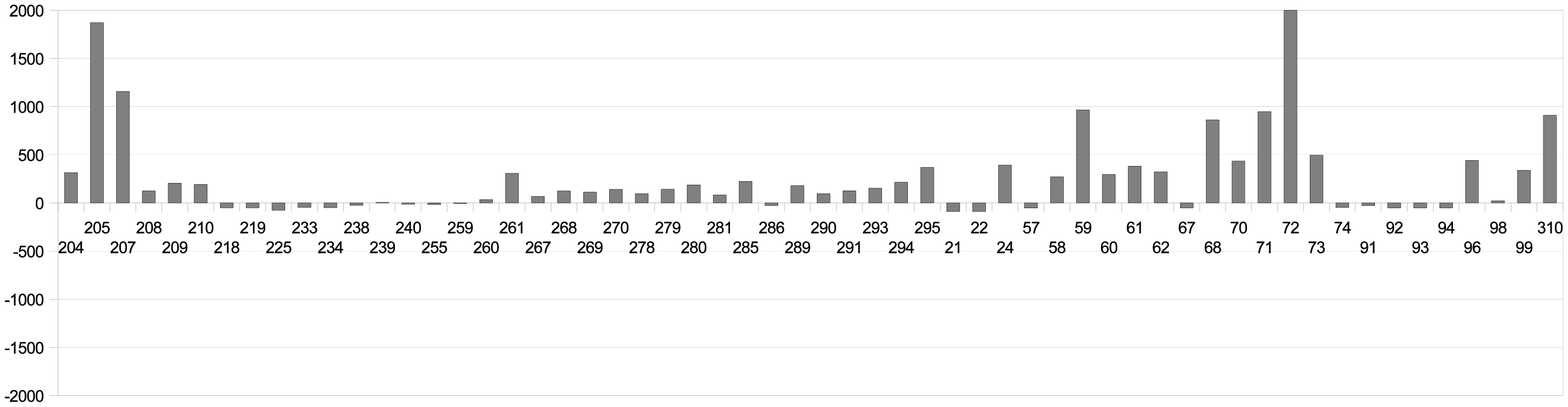}
\captionof{figure}{Linearly constrained (variable box) --- \rp(c) for BNLC \& SolvOpt ($T=1$)}
\label{NumericalResults:Figure:LCvariable_RP_c_T1}
\end{center}
\begin{center}
\captionsetup{type=figure}
\includegraphics[width=14cm]{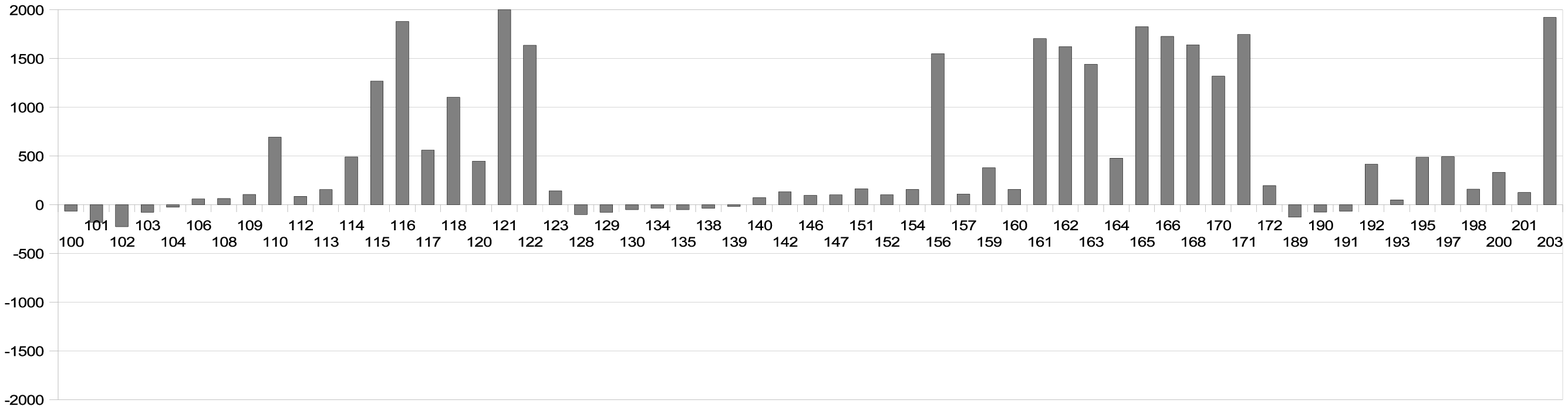}
\includegraphics[width=14cm]{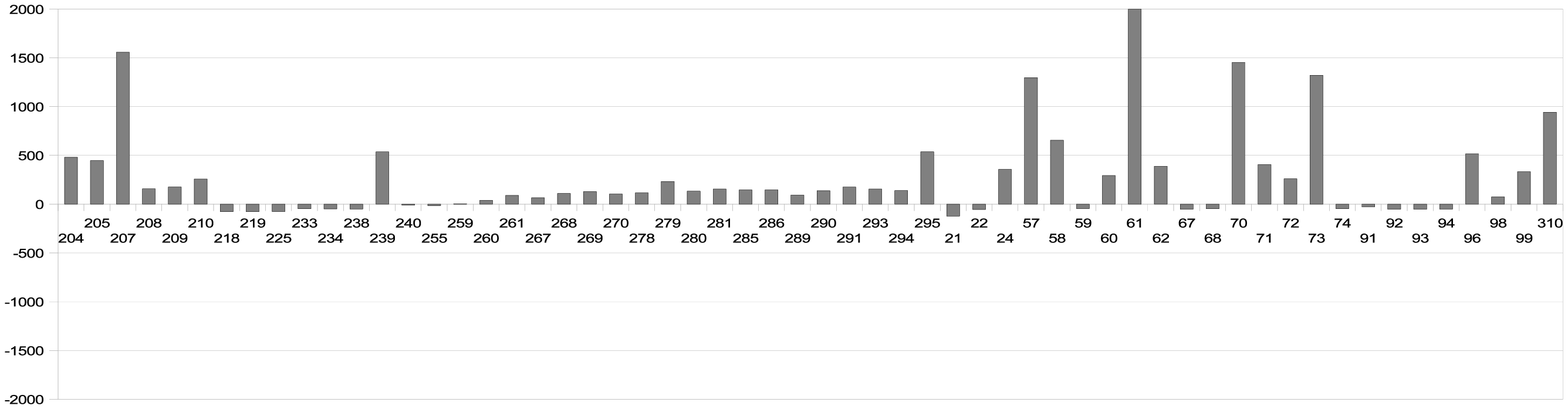}
\captionof{figure}{Linearly constrained (variable box) --- \rp(c) for BNLC \& SolvOpt ($T=\lvert y\rvert_{_2}$)}
\label{NumericalResults:Figure:LCvariable-RP-c-TNormy}
\end{center}
\begin{center}
\captionsetup{type=figure}
\includegraphics[width=14cm]{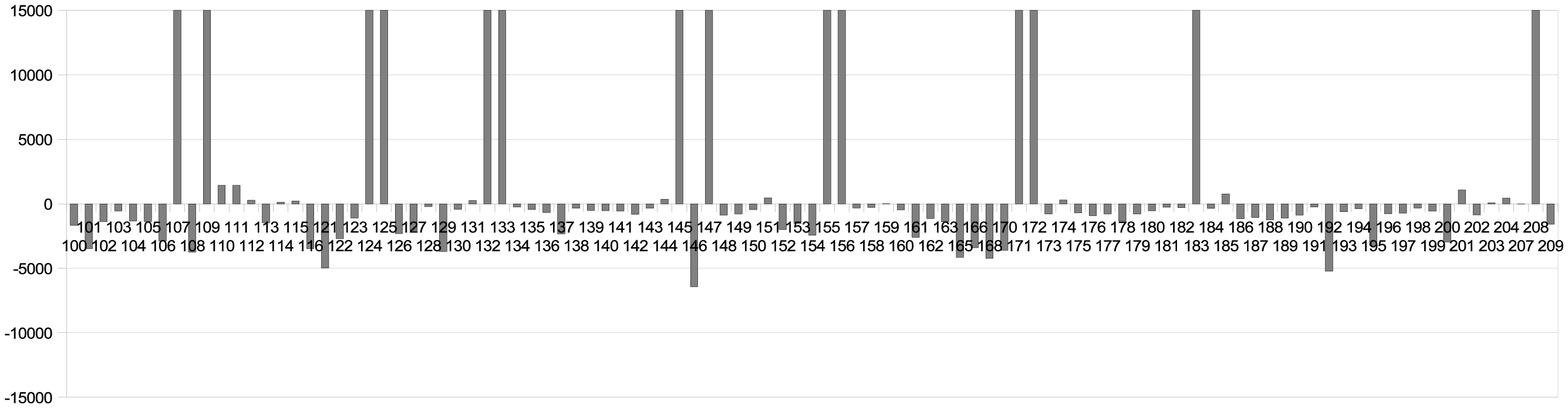}
\includegraphics[width=14cm]{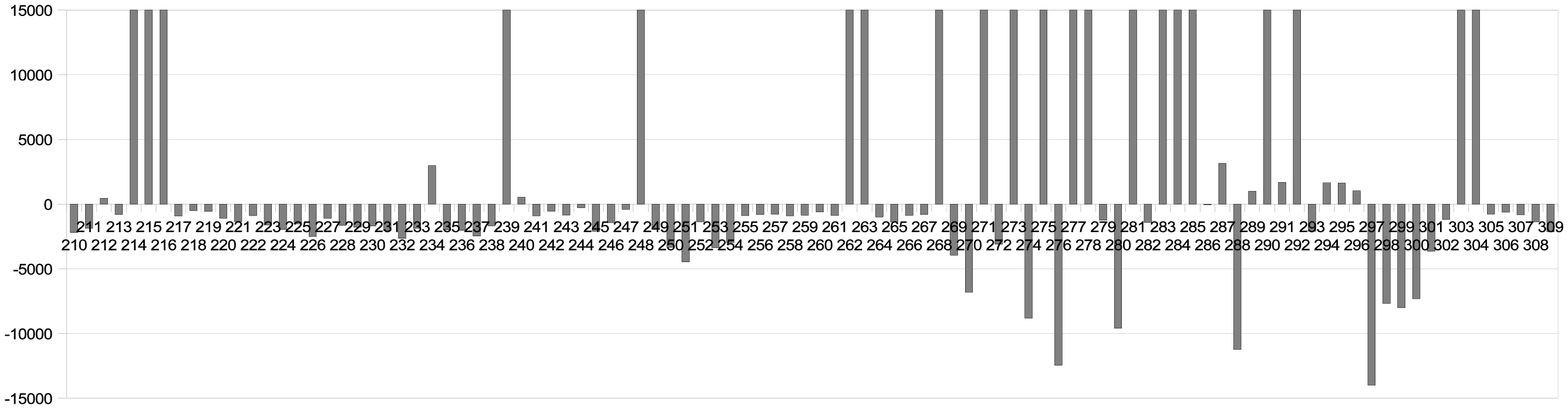}
\captionof{figure}{Nonlinearly constrained --- \rp(c) for Red Alg \& MPBNGC ($T=1$)}
\label{NumericalResults:Figure:NLC_RP_c_T1_AddonMPBNGC}
\end{center}
\begin{center}
\captionsetup{type=figure}
\includegraphics[width=14cm]{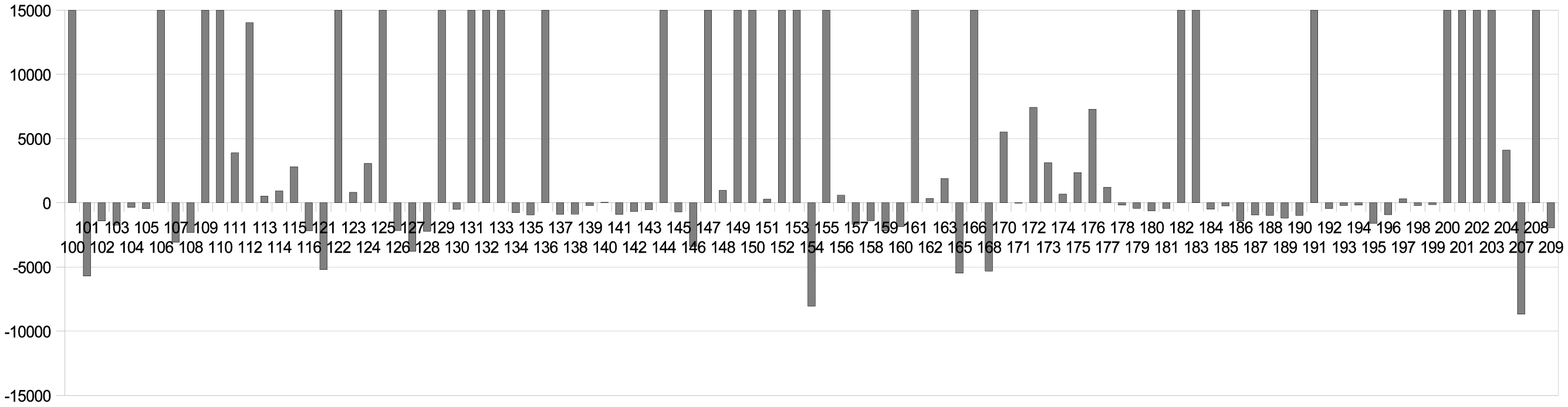}
\includegraphics[width=14cm]{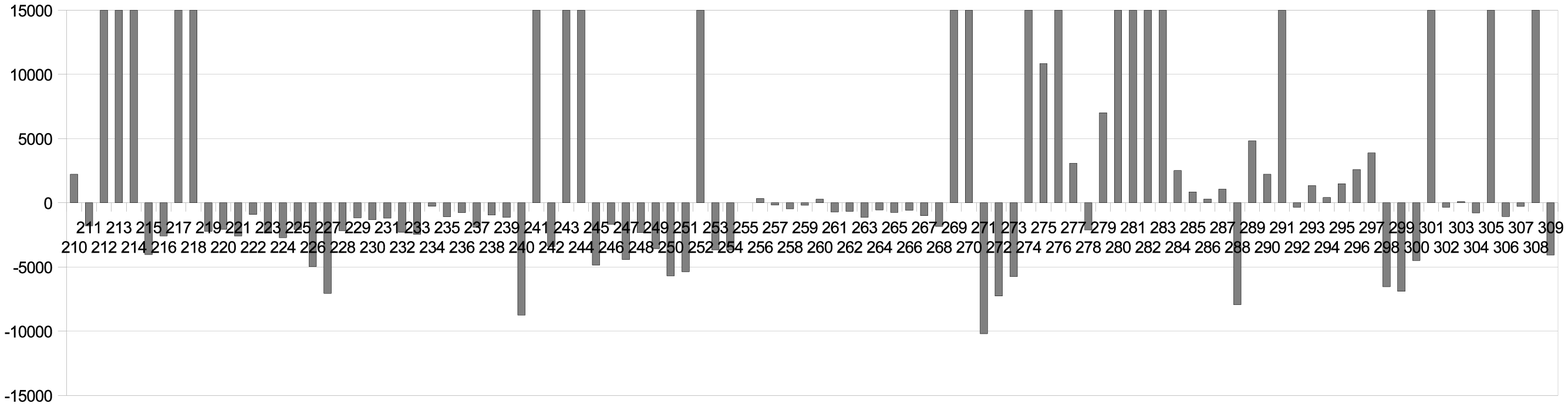}
\captionof{figure}{Nonlinearly constrained --- \rp(c) for Red Alg \& MPBNGC ($T=\lvert y\rvert_{_2}$)}
\label{NumericalResults:Figure:NLC_RP_c_TNormy_AddonMPBNGC}
\end{center}
\begin{center}
\captionsetup{type=figure}
\includegraphics[width=14cm]{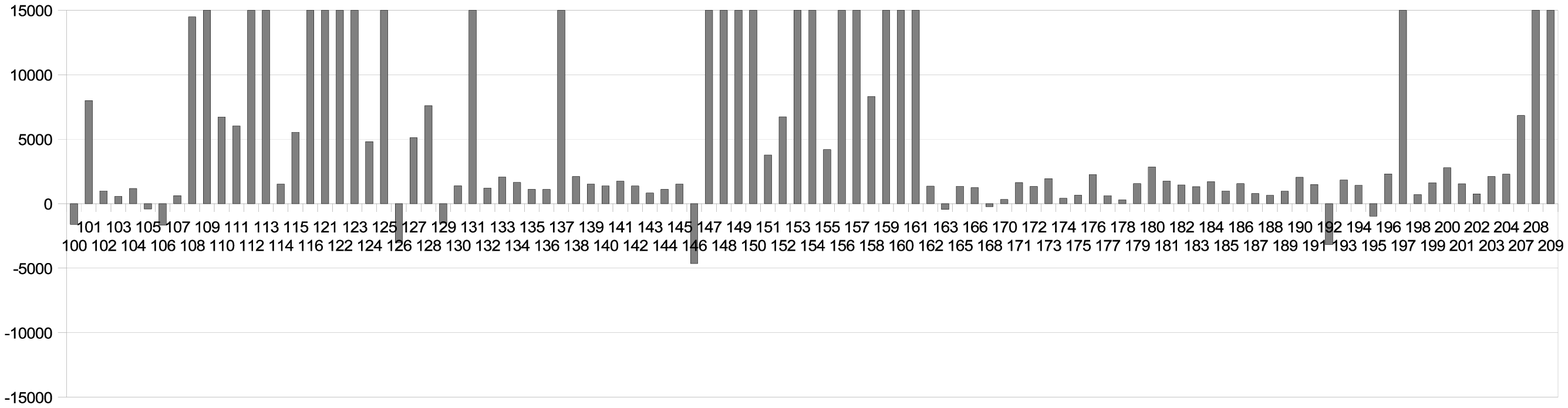}
\includegraphics[width=14cm]{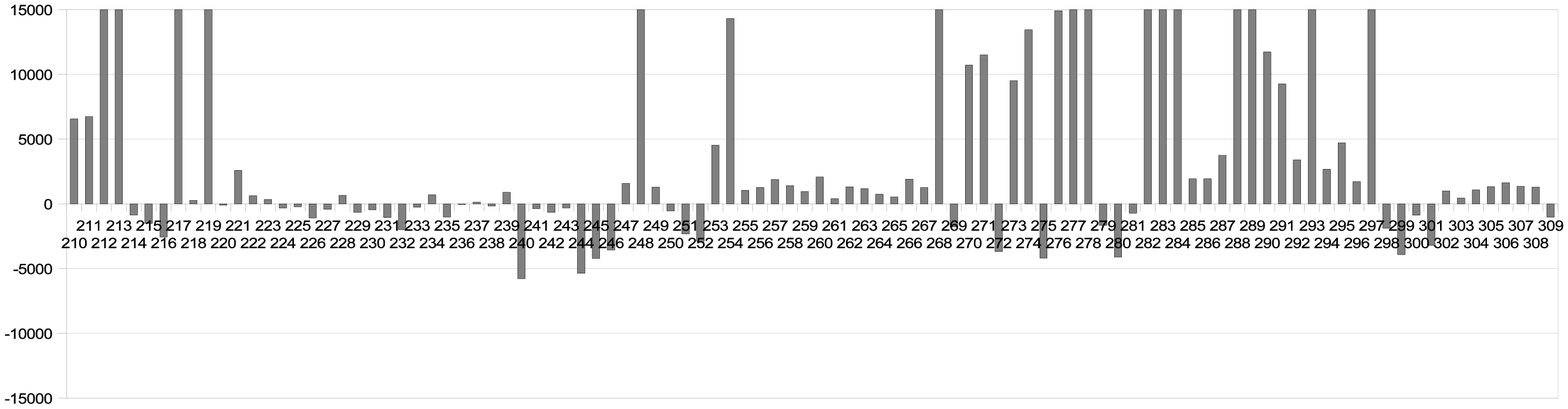}
\captionof{figure}{Nonlinearly constrained --- \rp(c) for Red Alg \& SolvOpt ($T=1$)}
\label{NumericalResults:Figure:NLC_RP_c_T1}
\end{center}
\begin{center}
\captionsetup{type=figure}
\includegraphics[width=14cm]{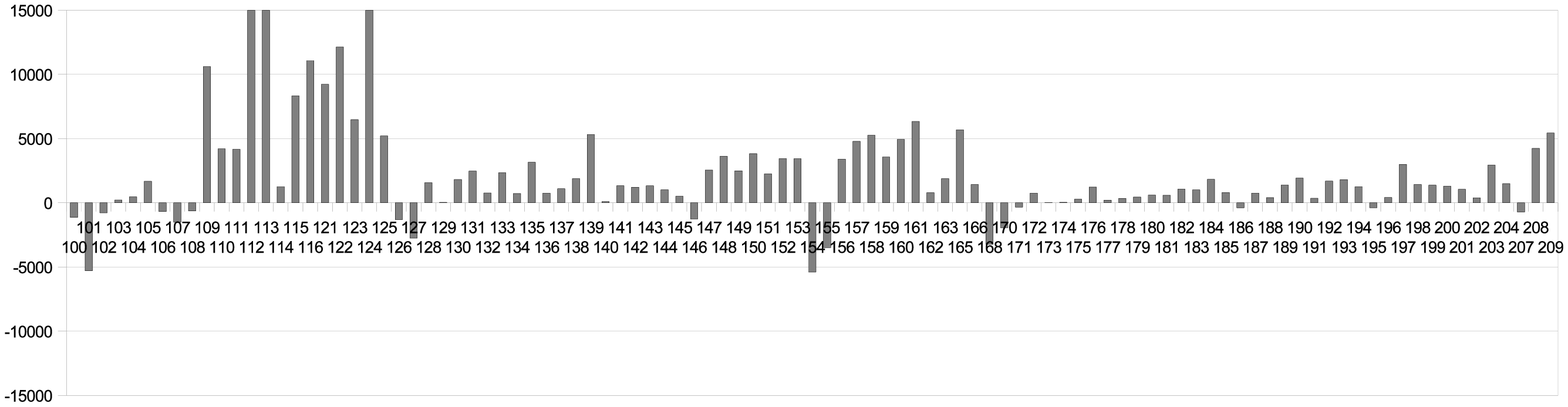}
\includegraphics[width=14cm]{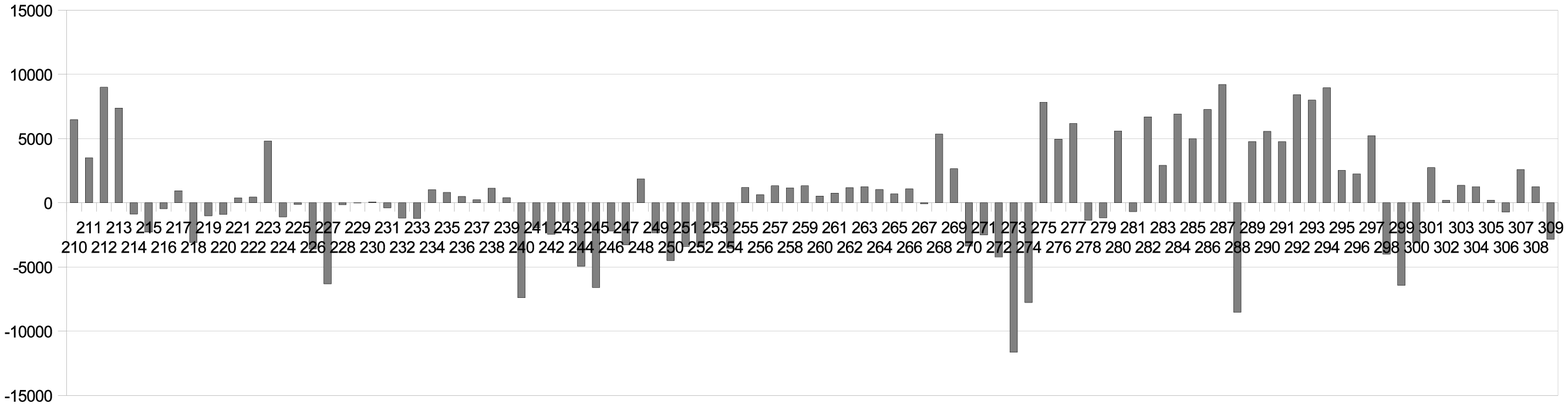}
\captionof{figure}{Nonlinearly constrained --- \rp(c) for Red Alg \& SolvOpt ($T=\lvert y\rvert_{_2}$)}
\label{NumericalResults:Figure:NLC_RP_c_TNormy}
\end{center}
\end{appendix}


\end{document}